\begin{document}

\newcommand{\ben}{\begin{enumerate}}
\newcommand{\een}{\end{enumerate}}
\newcommand{\be}{\begin{equation}}
\newcommand{\ee}{\end{equation}}
\newcommand{\bea}{\begin{eqnarray}}
\newcommand{\eea}{\end{eqnarray}}
\newcommand{\bc}{\begin{center}}
\newcommand{\ec}{\end{center}}

\newtheorem{thm}{Theorem}[section]
\newtheorem{cor}[thm]{Corollary}
\newtheorem{lem}[thm]{Lemma}
\newtheorem{prop}[thm]{Proposition}
\newtheorem{conj}[thm]{Conjecture}

\theoremstyle{definition}
\newtheorem{defn}[thm]{Definition}

\theoremstyle{remark}
\newtheorem{rem}[thm]{\rm\bfseries{Remark}}
\newtheorem*{notation}{Notation}

\newtheorem{ques}[thm]{\rm\bfseries{Question}}
\newtheorem{cons}[thm]{\rm\bfseries{Construction}}
\newtheorem{exm}[thm]{\rm\bfseries{Example}}


\newcommand{\C}{{\mathbb C}}
\newcommand{\R}{{\mathbb R}}
\newcommand{\Z}{{\mathbb Z}}
\newcommand{\Q}{{\mathbb Q}}
\renewcommand{\P}{{\mathbb P}}
\newcommand{\s}{{\mathbb S}}
\newcommand{\B}{{\mathbb B}}
\newcommand{\I}{{\mathbb I}}
\newcommand{\h}{{\mathbb H}}
\newcommand{\e}{{\mathbb E}}

              \newcommand{\J}{{\mathcal J}}
              \newcommand{\M}{{\mathcal M}}
              \newcommand{\W}{{\mathcal W}}
              \newcommand{\U}{{\mathcal U}}
              \newcommand{\T}{{\mathcal T}}
              \newcommand{\V}{{\mathcal V}}
              \newcommand{\E}{{\mathcal E}}
              \newcommand{\F}{{\mathcal F}}
              \renewcommand{\L}{{\mathcal L}}
              \renewcommand{\O}{{\mathcal O}}
              \newcommand{\N}{{\mathcal N}}
              \newcommand{\G}{{\mathcal G}}
              \renewcommand{\H}{{\mathcal H}}

\setcounter{page}{1}
\volume{3}

\title[Seiberg-Witten invariants of $3$-orbifolds]{Seiberg-Witten invariants of $3$-orbifolds and non-K\"{a}hler surfaces}
\author[CHEN]{Weimin Chen}

\thanks{The author is partially supported by NSF grant DMS-1065784.}

\address{Department of Mathematics and Statistics, University of Massachusetts-Amherst, Amherst, MA 01003, USA.}
\email{wchen@math.umass.edu}

\address{}
\email{}

\begin{abstract}
A formula is given which computes the Seiberg-Witten invariant of a $3$-orbifold from the invariant
of the underlying manifold. As an application, we derive a formula for the Seiberg-Witten invariant 
of a non-K\"{a}hler complex surface, which was originally due to O. Biquard \cite{Biq} and 
S.R. Williams \cite{W} independently. 
\end{abstract}
\keywords{Seiberg-Witten invariants, $3$-orbifolds, non-K\"{a}hler complex surfaces.}

\maketitle

\section{Introduction}
Let $Y$ be a closed, oriented $3$-orbifold with $b_1\geq 1$ whose singular set $\Sigma Y$ 
consists of a union of embedded circles. Consider a component $l\subset\Sigma Y$ with associated multiplicity $\alpha$, and let $Y_0$ be the oriented $3$-orbifold obtained from $Y$ by removing 
$l$ from the singular set. Our main result in this paper gives a relationship between the Seiberg-Witten invariants of $Y$ and $Y_0$. 

\begin{thm}
Let $S_Y,S_{Y_0}$ denote the set of $Spin^c$-structures on $Y,Y_0$ respectively. 
There exists a canonical map $\phi:S_Y\rightarrow S_{Y_0}$, which is surjective and $\alpha$ to $1$,
such that the Seiberg-Witten invariants of $Y$ and $Y_0$ obey the following equations 
$$
SW_Y(\xi)=SW_{Y_0}(\phi(\xi)), \;\; \forall \xi\in S_Y.
$$
\end{thm}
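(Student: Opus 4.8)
The plan is to localize the entire comparison to a tubular neighborhood of $l$ and to reduce the theorem to an explicit analysis of the Seiberg--Witten equations on a solid-torus orbifold. First I would make precise $S_Y$ and $S_{Y_0}$ as torsors over the orbifold Picard groups (the orbifold $H^2$). The two orbifolds share the complement of $l$ and differ only over a tubular neighborhood $N$, which is the cone solid torus $(D^2/\Z_\alpha)\times S^1$ for $Y$ and the smooth solid torus $D^2\times S^1$ for $Y_0$. I would construct $\phi$ by restricting a $Spin^c$-structure to $Y\setminus l = Y_0\setminus l$ and refilling across $l$; the fibers of $\phi$ are then the $Spin^c$-structures on $Y$ with a fixed restriction. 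Surjectivity and the $\alpha$-to-$1$ property I would derive from the local classification over $N$: orbifold line bundles on the cone solid torus carry an extra invariant, the weight of the $\Z_\alpha$-representation at the singular fibre, taking $\alpha$ values, whereas on the smooth solid torus this weight collapses. A Mayer--Vietoris computation, in which the meridian of $N$ is seen $\alpha$-fold in the orbifold chart, then shows that exactly $\alpha$ structures on $Y$ sit over each structure on $Y_0$.

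Next I would set up the two moduli problems so that they differ only inside $N$. Choosing orbifold metrics on $Y$ and $Y_0$ that agree on $Y\setminus N$ (a genuine cone metric of cone angle $2\pi/\alpha$ along $l$ for $Y$, a smooth metric for $Y_0$), together with a common perturbation supported in $Y\setminus N$, every solution on either orbifold restricts to a solution of the identical equation on $Y\setminus\mathrm{int}\,N$, since there the metric, the perturbation, and the restricted $Spin^c$-bundle coincide by construction of $\phi$. The comparison of $\M_Y(\xi)$ with $\M_{Y_0}(\phi(\xi))$ thus reduces to a relative (boundary-value) statement on $N$: I would show that a configuration on $Y\setminus\mathrm{int}\,N$ extends to a finite-energy orbifold solution on the cone solid torus for $\xi$ if and only if it extends to a smooth solution on $D^2\times S^1$ for $\phi(\xi)$, and that in both cases the extension is unique and depends on the boundary data in the same way.

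The core of the argument, and the step I expect to be the main obstacle, is this local model on $N$. Here I would pass to the $\Z_\alpha$-cover $D^2\times S^1$ and look for $\Z_\alpha$-equivariant solutions of the prescribed weight, analyzing the Dirac operator and the curvature equation on the cone directly. The essential point is rigidity: near $r=0$ the leading radial behaviour of the spinor is dictated entirely by the weight and the cone angle, a unique extension matching any given boundary value exists, and, crucially, the linearized operator (the Dirac operator coupled to $d+d^*$) has the same index and vanishing cokernel for every weight and for the smooth extension. This is where the cone angle $2\pi/\alpha$ and the representation weight actually enter, and where one must prove that they affect neither the regularity of the moduli space nor its count; the delicate part is the Fredholm and asymptotic analysis on the cone, together with the identification of the determinant lines so that the orientations transport correctly.

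Granting this, the restriction map identifies $\M_Y(\xi)$ with $\M_{Y_0}(\phi(\xi))$ as oriented zero-dimensional manifolds, whence the signed counts agree and $SW_Y(\xi)=SW_{Y_0}(\phi(\xi))$. To finish I would check independence of the choices: since $b_1\geq 1$ a reducible occurs only in the torsion case, and one can place the perturbations in a common chamber on the two sides because the equations coincide off $N$, so that any $b_1=1$ wall-crossing is matched on both sides. As a consistency check I would compare with the three-dimensional Meng--Taubes/Turaev description, where removing a singular circle should change the relevant (orbifold) Reidemeister--Turaev torsion precisely by the $\alpha$-fold folding of $Spin^c$-structures recorded by $\phi$.
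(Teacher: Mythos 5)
Your overall strategy differs from the paper's (which passes to the $4$-orbifolds $\s^1\times Y$, $\s^1\times Y_0$ and invokes the $T^3$-gluing theorems of Morgan--Mrowka--Szab\'o and Taubes), and the difference exposes a genuine gap at the core of your argument. Your central local claim is that, with metric and perturbation fixed on $Y\setminus \mathrm{int}\,N$, a configuration there extends to a finite-energy solution over the orbifold solid torus for $\xi$ if and only if it extends over the smooth solid torus for $\phi(\xi)$, so that restriction identifies $\M_Y(\xi)$ with $\M_{Y_0}(\phi(\xi))$. This is false. The solid-torus pieces carry positive scalar curvature, so their finite-energy moduli consist of reducibles whose limiting flat connection on $T$ is constrained by the relative first Chern class of the determinant line over the (orbifold) disc: for $\xi^{-}_\beta$ the meridian coordinate of the limit is pinned to $-(2\beta+1)/\alpha$ modulo the relevant lattice, whereas for the smooth filling it is pinned to $-1$ (this is the content of the paper's Lemma 2.3 and Lemma 3.1). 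These are \emph{different} affine subspaces of the torus of flat boundary values, so the two moduli spaces do not consist of the same configurations on the common piece, and no restriction map can identify them. What is actually true, and what the paper proves, is that the two constraint subspaces $\Omega_\beta$ and $\Omega_{0,-}$ are \emph{isotopic in the complement of the lattice of even integer coordinates}, where the ends of the moduli space on the complement of $l$ live; the equality of invariants is then an equality of intersection numbers under that isotopy, not a bijection of solution sets. Your argument needs this homotopy-invariance step, and without it the "rigidity of the local extension" you plan to prove would, if anything, show the two counts are taken against different boundary conditions.

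Two further points you would have to address. First, the comparison of counts against a family of boundary values only works when the complement piece has no reducibles interfering (the paper needs $b_2^{+}(M_{+})>0$, i.e.\ $l$ torsion, for the MMS structure theorem); when $l$ is non-torsion one must instead use Taubes' gluing along an essential torus, where the relative invariant of the solid-torus piece is a $\Z$-valued count indexed by the trivialization $h_{-}$, and the needed identity $SW([(\xi^{-}_\beta,h_{-})])=SW(\phi'([(\xi^{-}_\beta,h_{-})]))$ is established in the paper not by direct cone analysis but by consistency with a model example ($T^2\times\Sigma$) computed via wall-crossing. So your plan implicitly needs a case division you do not make. Second, the bookkeeping of $\phi$ itself is more delicate than "restrict and refill": the paper must track homotopy classes of boundary trivializations modulo the images of $H^1$ of the two sides (Lemmas 2.1--2.2), and the matching of the two fillings is pinned down by the fractional relative Chern number $(2\beta+1)/\alpha\mapsto 1$, which is exactly the datum your Mayer--Vietoris count of weights does not by itself determine.
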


\begin{rem}
(1) The Seiberg-Witten invariant is defined as in Meng-Taubes \cite{MT}, Taubes \cite{T2}. See
Baldridge \cite{Bald2} for the extension to $3$-orbifolds. In particular, when $b_1=1$, the Seiberg-Witten invariants of $Y,Y_0$ depend on a choice of orientation of $H^1(|Y|;\R)$, where $|Y|$ stands
for  the underlying $3$-manifold. With this understood, we remark that the same orientation of 
$H^1(|Y|;\R)$ is used for $Y,Y_0$ in Theorem 1.1. Also, the sign of the Seiberg-Witten invariants
depends on a choice of homology orientation of $|Y|$; the same is used for $Y,Y_0$.

(2) A more explicit description of the map $\phi:S_Y\rightarrow S_{Y_0}$ is given in Section 2, 
together with a formula relating the determinant line bundles of the $Spin^c$-structures $\xi$,
$\phi(\xi)$, $\forall \xi\in S_Y$, cf. Proposition 2.6.

(3) Theorem 1.1 holds true more generally where $Y$ is allowed to have non-empty boundary
$\partial Y$, where $\partial Y$ consists of a union of tori (cf. \cite{MT, T2}), and $\Sigma Y\cap 
\partial Y=\emptyset$.

(4) Theorem 1.1 (together with Proposition 2.6) shows that, in contrast to geometric structures, the
Seiberg-Witten invariant of a $3$-orbifold $Y$ is much less sensitive to the embedding of the
singular set $\Sigma Y$ in $|Y|$, i.e. $\Sigma Y\subset |Y|$ as a knot or link.

(5) The Seiberg-Witten invariant of a $3$-manifold is equivalent to the Turaev torsion of the 
manifold \cite{MT, T2, Tu1,Tu2}. The equivalence of the two invariants may be extended to 
$3$-orbifolds by establishing an analog of Theorem 1.1 for the Turaev torsion.
\end{rem}

Theorem 1.1 has the following topological consequence. Recall that an orientable $3$-orbifold
is called {\it pseudo-good} if it does not contain any bad $2$-suborbifold. The importance of this
notion lies in the fact that the basic theory of $3$-manifolds was extended only to this class of 
$3$-orbifolds (see e.g. \cite{BMP}). It is known that Thurston's Geometrization Conjecture for
$3$-orbifolds (which is now proven, cf.  \cite{BLP}) implies that a pseudo-good $3$-orbifold must 
 be very good, i.e., it admits a finite, regular manifold cover (cf. \cite{MM}). 
 
Every pseudo-good $3$-orbifold admits a spherical splitting, i.e., the 
$3$-orbifold may be cut open along a system of spherical $2$-suborbifolds such that after capping off the boundary each component becomes irreducible. We remark that, unlike the connected sum 
decomposition of a $3$-manifold into prime factors, the spherical splitting of a $3$-orbifold is not 
known to be unique in general, and the issue of non-separating spherical $2$-suborbifolds has to be treated 
differently (cf. \cite{BMP}, page 41). However, when there are no non-separating spherical $2$-suborbifolds, Petronio showed in \cite{Pe} that the $3$-orbifold admits a so-called efficient 
spherical splitting for which the uniqueness statement holds. 

The following corollary of Theorem 1.1 follows easily from the fact that if a $3$-manifold with
$b_1>1$ contains a non-separating $2$-sphere, or is a connected sum of two manifolds both
of which have nonzero $b_1$, then its Seiberg-Witten invariant vanishes.

\begin{cor}
Let $Y$ be an oriented $3$-orbifold with $b_1>1$ and $SW_Y\neq 0$, whose singular set
$\Sigma Y$ consists of a union of embedded circles. 
Then $Y$ is pseudo-good
and every spherical $2$-suborbifold of $Y$ is separating. Moreover, in the efficient spherical 
splitting of $Y$ exactly one irreducible component has $b_1\neq 0$. 
\end{cor}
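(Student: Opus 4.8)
\emph{Proof proposal.}
The plan is to transfer the problem to the underlying manifold $|Y|$ and then apply the two vanishing results quoted just before the corollary. First I would iterate Theorem 1.1, removing the components of $\Sigma Y$ from the singular set one at a time. At each stage the map $\phi$ is surjective and satisfies $SW_Y(\xi)=SW_{Y_0}(\phi(\xi))$, so a nonzero invariant is inherited by the orbifold with one fewer singular circle; after finitely many steps we reach $|Y|$ and conclude $SW_{|Y|}\neq 0$. Since the singular set has codimension two, $b_1(|Y|)=b_1(Y)>1$, so both quoted facts apply: $|Y|$ contains no non-separating $2$-sphere, and $|Y|$ is not a connected sum of two manifolds each with nonzero $b_1$.

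For pseudo-goodness, note that as $\Sigma Y$ is a union of embedded circles, a bad $2$-suborbifold $F$ must be a teardrop $S^2(p)$ or a spindle $S^2(p,q)$ with $p\neq q$, and in either case $|F|\subset|Y|$ is an embedded $2$-sphere. A teardrop meets $\Sigma Y$ in a single point, so one singular circle crosses $|F|$ exactly once; a spindle meets $\Sigma Y$ in two points of distinct cone orders, which therefore lie on two distinct singular circles, each crossing $|F|$ once. In both cases some closed loop meets $|F|$ an odd number of times. Were $|F|$ separating, every loop would meet it an even number of times; hence $|F|$ is non-separating, contradicting the first vanishing result. Thus no bad $2$-suborbifold exists and $Y$ is pseudo-good.

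The same parity argument gives the separation statement: the underlying space of a spherical $2$-suborbifold is again an embedded $2$-sphere in $|Y|$, and a non-separating one is excluded by the first vanishing result, so every spherical $2$-suborbifold of $Y$ is separating. In particular $Y$ has no non-separating spherical $2$-suborbifold, so Petronio's efficient spherical splitting is available and its components are well defined. For the final count I would cut $Y$ along the separating spherical $2$-suborbifolds of this splitting and cap off; on underlying spaces this realizes a connected-sum decomposition $|Y|=|Y_1|\#\cdots\#|Y_k|$, with $b_1(Y_j)=b_1(|Y_j|)$ for each irreducible component $Y_j$. Additivity of $b_1$ under connected sum yields $\sum_j b_1(|Y_j|)=b_1(|Y|)>1$, so at least one component has positive $b_1$; and if two did, regrouping the factors would exhibit $|Y|$ as a connected sum of two manifolds both with nonzero $b_1$, contradicting the second vanishing result. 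Hence exactly one irreducible component has $b_1\neq 0$.

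I expect the main obstacle to be the orbifold-to-manifold bookkeeping rather than any single deep step: one must verify that the iteration of Theorem 1.1 genuinely terminates at $|Y|$ with nonzero invariant, that passing to underlying spaces faithfully converts the spherical splitting into a connected-sum decomposition with matching first Betti numbers, and that the classification of bad and spherical $2$-suborbifolds used above is exhaustive in the presence of a purely circular singular set.
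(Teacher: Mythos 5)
Your proposal is correct and follows exactly the route the paper intends: iterate Theorem 1.1 to transfer $SW\neq 0$ to the underlying manifold $|Y|$, then invoke the two quoted vanishing facts, using that bad and spherical $2$-suborbifolds have underlying space $S^2$ so that non-separation and the spherical splitting pass faithfully to a non-separating sphere, respectively a connected-sum decomposition, in $|Y|$. The paper offers no further detail beyond this outline, and your parity argument for teardrops and spindles and the additivity of $b_1$ supply precisely the missing bookkeeping.
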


Corollary 1.3 has applications in the study of $4$-manifolds admitting a smooth, fixed-point free
circle action.

In this paper, we shall focus on another application of Theorem 1.1, where we derive a formula
for the Seiberg-Witten invariant of a non-K\"{a}hler complex surface. After this paper was 
completed, we found that a similar result was already obtained by O. Biquard \cite{Biq}
and S.R. Williams \cite{W} (independently)
using a different (and analytical) method. The Seiberg-Witten invariants of K\"{a}hler surfaces 
were determined by Brussee \cite{B} and Friedman-Morgan \cite{FM1} independently.

In order to state the formula, recall that according to the Enriques-Kodaira classification 
(cf. \cite{BPV}), a minimal complex surface $X$ with $b_2^{+}\geq 1$ is either a rational or ruled surface, or an elliptic surface, or a surface of general type. Moreover, if $X$ is non-K\"{a}hler,
it must be an elliptic surface with Euler number zero. According to \cite{FM}, Theorem 7.7, 
$X$ is obtained from the product $E\times C$, where $E=\C/\Lambda$ is an elliptic curve 
and $C$ is a curve of genus $g$, by doing logarithmic transforms on lifts $x_1,\cdots,x_n\in\C$ 
of $m_i$-torsion points $\xi_i$ modulo $\Lambda$, with $\sum_i x_i \neq 0$. Note that the assumption $b_2^{+}\geq 1$ implies that $g\neq 0$, which in turn implies that $b_2^{+}>1$.

Fix a basis ${\bf e}_1, {\bf e}_2$ of $\Lambda$. There are integers $a_i$, $b_i$ such that
$$
x_i=\frac{a_i{\bf e}_1+b_i{\bf e}_2}{m_i}, \;\; i=1,2,\cdots,n.
$$
Let $\Sigma$ be the $2$-orbifold whose underlying space is $C$, with singular points
$t_i$ of multiplicities $m_i$, where $t_i\in C$ is the point over which the logarithmic transform 
on $x_i$ is performed.
(Here we abuse the notation in the sense that when $m_i=1$, $t_i$ is in fact a regular point.)
With this understood, there are associated orbifold complex line bundles $E_1$, $E_2$ over
$\Sigma$ which
are defined as follows. Let $a_i=q_i m_i +u_i$, $b_i=r_i m_i+v_i$, where $0\leq u_i,v_i<m_i$.
Then the Seifert invariants of $E_1$, $E_2$ are 
$(\sum_i q_i, (m_i,u_i))$ and $(\sum_i r_i, (m_i,v_i))$ respectively. We remark that $E_1$, $E_2$
depend on the choice of the basis ${\bf e}_1$, ${\bf e}_2$, but the subgroup of orbifold complex
line bundles generated by $E_1$, $E_2$ depends only on $X$ (see Section 6 for more details).
We denote the subgroup by $\Gamma_X$. 

For any orbifold complex line bundle $D$ over $\Sigma$, we denote by $(D)$ the orbit of $D$ 
under the action of $\Gamma_X$, i.e., $(D)=(D^\prime)$ if and only if $D\equiv D^\prime \pmod{\Gamma_X}$. Finally, for an orbifold complex line bundle $D$ with Seifert invariant 
$(d, (m_i, s_i))$, we denote the ``background" degree $d$ by $|D|$. 

With the preceding understood, we have 

\begin{thm}
Let $X$ be a minimal, non-K\"{a}hler, complex surface with $b_2^{+}>1$. With the notations
introduced above, the set of $Spin^c$-structures on $X$ which have nonzero Seiberg-Witten invariant may be identified with a subset of orbits $(D)$, such that under this identification,
if a $Spin^c$-structure $\L$ corresponds to $(D)$ and $D$ has Seifert invariant $(d, (m_i,s_i))$, 
then in terms of Poincar\'{e} duality,
$$
c_1(\det \L)=(2d-2g+2)F +\sum_i (2s_i+1-m_i)F_i,
$$
where $F$ stands for a regular fiber and $F_i$ stands for the fiber at $t_i$ of the elliptic fibration
on $X$. Moreover, the Seiberg-Witten invariant of $X$ is given by
$$
SW_X(\L)\equiv SW_X((D))=\sum_{D^\prime\in(D)} (-1)^{|D^\prime|} \left (\begin{array}{c}
2g-2\\\
|D^\prime|\\
\end{array}
\right ).
$$
In the above formula, $ \left (\begin{array}{c}
2g-2\\
|D^\prime|\\
\end{array}
\right )=0$ if $|D^\prime|$ lies outside the interval $[0, 2g-2]$. 
\end{thm}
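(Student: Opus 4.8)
The plan is to convert the four-dimensional invariant $SW_X$ into a three-dimensional orbifold invariant to which Theorem 1.1 applies, and then to evaluate the resulting invariant of the underlying $3$-manifold by Turaev torsion. Since $X$ is minimal, non-K\"{a}hler and has $b_2^{+}>1$, it is a properly elliptic surface of Euler number zero obtained from $E\times C$ by logarithmic transforms, and the translation action of a one-parameter subgroup of $E=\C/\Lambda$ (in one of the coordinate directions ${\bf e}_1,{\bf e}_2$) induces a \emph{locally free but not free} $S^1$-action on $X$, with isotropy group $\Z_{m_i}$ along the multiple fiber $F_i$. The first step is to identify the quotient $Y:=X/S^1$ as a closed oriented $3$-orbifold, Seifert-fibered over $\Sigma$, whose singular set is the union of the circles $l_i=F_i/S^1$ of multiplicity $m_i$; the regular fiber $F$ projects to a regular $S^1$-fiber of $Y$, the multiple fiber $F_i$ projects to $l_i$, and $X\to Y$ becomes an $S^1$-orbibundle carrying Spin$^c$-data between the two levels.

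Next I would invoke the dimensional reduction of Seiberg-Witten theory under this circle action (cf.\ Baldridge \cite{Bald2}, extending Meng--Taubes \cite{MT} and Taubes \cite{T2}), identifying $SW_X(\L)$ with the three-dimensional orbifold invariant $SW_Y$ evaluated on the Spin$^c$-structure(s) on $Y$ induced by $\L$. I would then apply Theorem 1.1 repeatedly, removing the singular circles $l_1,\dots,l_n$ from $\Sigma Y$ one at a time; at the $i$-th stage the canonical map $\phi$ is $m_i$-to-$1$, and Proposition 2.6 records the change in the determinant line. After all $n$ stages one reaches the underlying $3$-manifold $Y_0=|Y|$, which is the Seifert circle bundle over $C$; in the model (untransformed) case $Y_0=S^1\times C$. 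Tracking the Seifert data $(d,(m_i,s_i))$ of the orbifold line bundle $D$ through these reductions, and recording how $F$ and the $F_i$ enter, should yield the Poincar\'{e}-dual identity $c_1(\det\L)=(2d-2g+2)F+\sum_i(2s_i+1-m_i)F_i$.

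It then remains to evaluate $SW_{Y_0}$. By the equivalence of the Seiberg-Witten invariant of a $3$-manifold with its Turaev torsion (\cite{MT, T2, Tu1, Tu2}), $SW_{Y_0}$ is read off from the Milnor--Turaev torsion of $S^1\times C$, namely $\left(t^{1/2}-t^{-1/2}\right)^{2g-2}$, whose coefficients are exactly $(-1)^{k}\left(\begin{array}{c}2g-2\\k\end{array}\right)$, with $k$ the pairing of $c_1$ with the fiber class; since this pairing is the background degree, $k=|D'|$. Finally I would sum these contributions over the Spin$^c$-structures on $Y$ reducing to $\L$: after iterating the $\alpha$-to-$1$ map $\phi$ and accounting for the ambiguity in the choice of $S^1\subset E$ (the two directions giving the two generators $E_1,E_2$ of $\Gamma_X$, one used for the reduction and one surviving in $H^1(|Y|)$), these are indexed precisely by the orbit $(D)$. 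This produces the stated formula, with the convention that the binomial coefficient vanishes for $|D'|$ outside $[0,2g-2]$.

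The main obstacle I expect is the bookkeeping in the middle two steps, rather than any single hard estimate. One must (i) pin down the exact correspondence among $S^1$-invariant Spin$^c$-structures $\L$ on $X$, Spin$^c$-structures $\xi$ on the orbifold $Y$, and orbifold line bundles $D$ on $\Sigma$, so that the determinant-line formula of Proposition 2.6 is compatible with the Chern-class formula above; and (ii) verify that the set of $\xi$ reducing to a given $\L$ is in canonical bijection with the $\Gamma_X$-orbit $(D)$, with no leftover factor from the $m_i$-to-$1$ multiplicities. A secondary point to be checked is that the background $3$-manifold $Y_0$ contributes exactly the product computation for $S^1\times C$, so that the nontrivial circle-bundle structure produced by the logarithmic transforms is absorbed into the orbit sum and does not spoil the clean binomial formula.
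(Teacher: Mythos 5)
Your skeleton matches the paper's: reduce $X$ along a coordinate circle of $E=\C/\Lambda$ to the Seifert fibered $3$-orbifold $Y_2=X/\s^1$ by Baldridge's dimensional reduction, then use Theorem 1.1 (with Proposition 2.6 controlling the determinant lines) to strip the singular circles and land on the underlying Seifert $3$-manifold. The paper's final step differs from yours: rather than Turaev torsion, it applies Baldridge's Theorem C a second time to the resulting Seifert $3$-manifold over $C$, and Theorem 1.1 once more to $\s^1\times\Sigma$, so that everything terminates at the known invariant of $\s^1\times C$. Your torsion route is legitimate in principle, but it is not what the paper does, and it is exactly where your proposal has a gap.

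The gap concerns where the orbit sum comes from. The orbit $(D)$ is an orbit of the rank-two group $\Gamma_X=\langle E_1,E_2\rangle$, and the Baldridge reduction $X\to Y_2$ accounts only for the $E_1$-direction: the $Spin^c$-structures on $Y_2$ pulling back to $\L$ differ by multiples of $\pi_2^\ast E_1$. The $E_2$-direction cannot be produced by ``iterating the $\alpha$-to-$1$ map $\phi$'': Theorem 1.1 is the termwise equality $SW_Y(\xi)=SW_{Y_0}(\phi(\xi))$, so the multiplicity of $\phi$ contributes no summation whatsoever. The second summation is hidden precisely in the point you set aside as secondary: $|Y_2|$ is a circle bundle over $C$ whose Euler number $|E_2|$ is in general nonzero, so its Seiberg--Witten invariant is not read off from the torsion of $\s^1\times C$; it is itself a sum of the $\s^1\times C$ values over line bundles on $C$ differing by multiples of $|E_2|$ (this is the content of the second application of Baldridge's theorem in Section 5, illustrated by Examples 5.2 and 5.3). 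If you keep the torsion route, you must compute the Turaev torsion of this nontrivial Seifert manifold and verify that it yields exactly that inner sum. Two smaller corrections: the isotropy of the chosen $\s^1$-action along $F_i$ is cyclic of order $\gcd(m_i,v_i)$ (or $\gcd(m_i,u_i)$, depending on which coordinate circle you quotient by), not $\Z_{m_i}$ --- the latter is the isotropy of the full $T^2$-action --- so the singular circles of $Y_2$ carry those multiplicities; correspondingly, the verification of the $c_1(\det\L)$ formula must track Proposition 2.6 through each of these $\gcd$'s, as in the proof of Theorem 6.1, rather than through multiplicity $m_i$.
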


The organization of the remaining sections is as follows. In Section 2 we define the map 
$\phi: S_Y\rightarrow S_{Y_0}$ and discuss its relevant properties. Sections 3 and 4 are
devoted to the proof of Theorem 1.1, which is based on the gluing theorems of 
Morgan-Mrowka-Szab\'{o} \cite{MMS} and Taubes \cite{T2} respectively. In Section 5 we 
compute some examples to illustrate the theorem using Seifert $3$-manifolds. 
The proof of Theorem 1.4 about non-K\"{a}hler complex surfaces is given in 
Section 6 (see Theorem 6.2).

\vspace{3mm}

Part of the material in this paper has been circulated in a preprint ``Seifert fibered four-manifolds
with nonzero Seiberg-Witten invariant", arXiv:1103.5681v2 [math.GT].

\vspace{5mm}

\section{The map $\phi: S_Y\rightarrow S_{Y_0}$}

In order to define the map $\phi: S_Y\rightarrow S_{Y_0}$, we shall decompose $Y$ as a union
of $Y_{+}$ and $Y_{-}$ along a $2$-torus $T$, where $Y_{+}$ is the complement of a regular
neighborhood of $l$ in $|Y|$, and $Y_{-}$ is a regular neighborhood of $l$ in $Y$, which is 
a $3$-orbifold modeled by $D^2\times \s^1/\Z_\alpha$. In the above model, $D^2$ is the unit disc 
in $\C$ with the standard $\Z_\alpha$-action generated by a rotation of angle $2\pi/\alpha$, and the 
$\Z_\alpha$-action on $D^2\times \s^1$ is given by the product action which is trivial on the 
$\s^1$-factor. As $D^2$ is canonically oriented as a subset of $\C$, the orientation of $Y$ determines
an orientation of $l=\{0\}\times \s^1/\Z_\alpha$. Reversing the orientation of $l$ means reversing the
orientation of $D^2$. Likewise, we decompose $Y_0$ as a union of $Y_{+}$ and $Y_{0,-}$.
Here $Y_{0,-}$ is a regular neighborhood of $l$ in $Y_0$, identified with $D^2\times \s^1$,
where $D^2=|D^2/\Z_\alpha|$ is the underlying $2$-disc of the orbifold $D^2/\Z_\alpha$. 
The orientation of $l$ inherited from the identification $Y_{0,-}=D^2\times \s^1$ is the same as 
the one inherited from $Y_{-}=D^2\times\s^1/\Z_\alpha$.

Fixing such a decomposition for $Y$ (resp. $Y_0$), one can describe a $Spin^c$-structure
on $Y$ (resp. $Y_0$) in terms of $Spin^c$-structures on $Y_{+}$ and $Y_{-}$ (resp. $Y_{+}$
and $Y_{0,-}$) and some gluing datum along the $2$-torus $T=\partial Y_{+}=\partial Y_{-} (=
\partial Y_{0,-}$ respectively). To be more precise, let a normal direction along $T$ be chosen.
This is equivalent to fixing an orientation on $T$. Then any $Spin^c$-structure $\xi^{+}$ on 
$Y_{+}$ (likewise any $\xi^{-}$ on $Y_{-}$, or $\xi$ on $Y$) as an isomorphism class of principal 
$Spin^c(3)$-bundles lifting the $SO(3)$-bundle of oriented orthonormal frames admits a reduction 
to a principal $Spin^c(2)$-bundle over a neighborhood of $T$; that principal $Spin^c(2)$-bundle
corresponds to a $Spin^c$-structure on $T$, which we will denote by $\xi_{+}|_T$, called the
restriction of $\xi_{+}$ to $T$. It follows easily that $\xi_{+}|_T$ is isomorphic to the trivial 
$Spin^c$-structure on $T$, i.e., the unique $Spin^c$-structure on $T$ that has a trivial determinant line bundle, which admits a reduction to a $Spin$-structure on $T$. The $Spin$-structures on 
$T=\s^1\times \s^1$ are classified by $H^1(T;\Z_2)=\Z_2\times \Z_2$, and each $Spin$-structure 
gives a specific trivialization of the determinant line bundle of the $Spin^c$-structure on $T$. 
There is a special $Spin$-structure on $T$, denoted by $\xi_0$, which is the product of the non-trivial $Spin$-structure of each $\s^1$-factor of $T$. We remark that if $\xi_0$ is identified with $0\in
H^1(T;\Z_2)$, then for any other $Spin$-structure $\eta$ on $T$ which is identified with $\hat{\eta}
\in H^1(T;\Z_2)$, $\hat{\eta}$ is non-zero on a $\s^1$-factor of $T$ if and only if $\eta$ can be 
extended to a $Spin$-structure on a solid torus bounded by $T$ in which the $\s^1$-factor of $T$
bounds a disc. 

With the preceding understood, let $S_{Y_{+}}, S_{Y_{-}}$ be the set of $Spin^c$-structures on
$Y_{+}$, $Y_{-}$ respectively, and let  $S_{Y_{+}}^0, S_{Y_{-}}^0$ be the set of pairs 
$(\xi^{+},h_{+})$, $(\xi^{-},h_{-})$, where $\xi^{+}\in S_{Y_{+}}, \xi^{-}\in S_{Y_{-}}$,
and $h_{+}$, $h_{-}$ are homotopy classes of isomorphisms $\xi^{+}|_T\rightarrow \xi_0$,
$\xi^{-}|_T\rightarrow \xi_0$ respectively. Note that the automorphisms $\xi_0\rightarrow\xi_0$
are given by elements of $C^\infty(T;\s^1)$, whose homotopy classes may be identified with 
$H^1(T;\Z)$. Likewise, for any $\xi^{+}\in S_{Y_{+}}, \xi^{-}\in S_{Y_{-}}$, the automorphisms of 
$\xi^{+}$, $\xi^{-}$ are given by elements of $C^\infty(Y_{+};\s^1)$, $C^\infty(Y_{-};\s^1)$ respectively. The following lemma determines the set of homotopy classes of elements of $C^\infty(Y_{+};\s^1)$ and $C^\infty(Y_{-};\s^1)$.

\begin{lem}
Let $W$ be a compact orbifold (with or without boundary) whose singular set consists of a disjoint union of manifolds of co-dimension $2$. Let $|W|$ denote the underlying topological space and let 
$C^\infty(W; \s^1)$ denote the space of smooth circle-valued functions on the orbifold $W$. Then
there is a natural identification between $\pi_0(C^\infty(W;\s^1))$ and $H^1(|W|;\Z)$.
\end{lem}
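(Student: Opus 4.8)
The plan is to build the identification as the map $\Psi$ that sends a smooth orbifold function to the homotopy class of the continuous function it induces on $|W|$, and then to prove $\Psi$ is a bijection by a smoothing argument localized near the singular set $\Sigma W$. First I would observe that on each uniformizing chart $\tilde U\to U=\tilde U/\Gamma$ a smooth orbifold map $f:W\to\s^1$ is by definition a $\Gamma$-invariant smooth map $\tilde U\to\s^1$ (the target carrying the trivial $\Gamma$-action), so it descends to a continuous map $U=|U|\to\s^1$; these descended maps glue to a continuous $|f|\colon|W|\to\s^1$. Since $|W|$ is compact and homotopy equivalent to a finite CW complex and $\s^1=K(\Z,1)$, one has $[|W|,\s^1]\cong H^1(|W|;\Z)$, and I would set $\Psi([f])=[|f|]$. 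That $\Psi$ is well defined on $\pi_0$ is immediate, because a path in $C^\infty(W;\s^1)$ descends to a homotopy of the underlying maps.

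The core of the proof is to invert $\Psi$, and both surjectivity and injectivity reduce to a single statement: every continuous $g\colon|W|\to\s^1$ is homotopic to $|f|$ for some smooth orbifold $f$, and this can be arranged relative to any region on which a compatible smooth orbifold function is already given. To produce $f$, fix a closed tubular neighborhood $N$ of $\Sigma:=|\Sigma W|$ with orbifold-fiber projection $\pi\colon N\to\Sigma$, so that $N$ is modeled on the normal $D^2$-bundle modulo the fiberwise action of the (finite) isotropy groups and $W\setminus\mathrm{int}(N)$ is a smooth manifold with boundary $\partial N$. Using that $N$ deformation retracts to $\Sigma$ along its $D^2$-fibers, together with the homotopy extension property, I would first homotope $g$ so that $g|_N=\pi^\ast(g|_\Sigma)$ and so that $g|_\Sigma$ is smooth (the latter by ordinary smoothing on the manifold $\Sigma$). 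A function constant along the $D^2$-fibers is automatically invariant under the isotropy action, hence $\pi^\ast(g|_\Sigma)$ is a genuine smooth orbifold function on $N$; it then remains to smooth $g$ on the manifold $W\setminus\mathrm{int}(N)$ relative to $\partial N$, where it already agrees with the smooth boundary value $\pi^\ast(g|_\Sigma)|_{\partial N}$. This yields the desired $f$ and gives surjectivity. For injectivity I would run the identical construction on the product orbifold $W\times I$ (whose singular set is $\Sigma W\times I$): a continuous homotopy between $|f_0|$ and $|f_1|$ becomes, after smoothing relative to $W\times\{0,1\}$, a path in $C^\infty(W;\s^1)$ joining $f_0$ to $f_1$.

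The main obstacle is the localized smoothing near $\Sigma$: one must check that the fiber-constant extension $\pi^\ast(g|_\Sigma)$ really is smooth as an \emph{orbifold} map, and that the relative smoothing on $W\setminus\mathrm{int}(N)$ can be carried out without disturbing the orbifold-smooth data already fixed on $N$. This is exactly where the codimension-two hypothesis is essential. Because the normal slice is the contractible disc $D^2$, the isotropy invariance imposes no constraint on the attainable homotopy class---the winding transverse to $\Sigma$ that invariance would obstruct simply does not exist---so no classes in $H^1(|W|;\Z)$ are lost, and conversely the descent to $|W|$ identifies no distinct classes. Once these smoothing steps are justified, the naturality of the identification follows, since $\Psi$ is assembled entirely from chartwise descent and the canonical isomorphism $[|W|,\s^1]\cong H^1(|W|;\Z)$.
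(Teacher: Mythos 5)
Your proposal is correct, but it inverts the map by a different mechanism than the paper. Both arguments define the identification the same way (descend a smooth orbifold function to a continuous function on $|W|$ and take its class in $H^1(|W|;\Z)\cong[|W|,\s^1]$), and both reduce everything to producing a smooth orbifold representative in a given homotopy class, rel given data. The paper's device for this is to observe that, because the singularities have codimension $2$, the underlying space $|W|$ is itself a smooth manifold and the projection $W\rightarrow |W|$ is \emph{smooth} in local uniformizing coordinates, being $(w,z)\mapsto (w,z^m)$; hence one simply represents the class by a smooth function on $|W|$ and pulls it back, and for injectivity one perturbs the descended homotopy to be smooth and restores equivariance of the lifts by averaging over the local isotropy groups. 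You instead never use the smooth structure on $|W|$ or the smoothness of the projection: you retract to fiber-constant functions on a tubular neighborhood of the singular locus and then do ordinary relative smoothing on the manifold complement (and on $W\times I$ for injectivity). Your route is longer but arguably more robust---it isolates exactly where codimension $2$ enters (contractible normal slice, no transverse winding to obstruct) and would survive in settings where the projection is not smooth; the paper's route is shorter and exploits the special fact that cyclic codimension-$2$ quotients are honestly smooth. One small point to tighten in your version: to smooth $g$ on $W\setminus\mathrm{int}(N)$ relative to the boundary you need $g$ already smooth on a collar of $\partial N$ inside the region being smoothed, not merely on $\partial N$ itself; this is handled by shrinking $N$ to a smaller tubular neighborhood $N'$ and smoothing on $W\setminus\mathrm{int}(N')$, which contains the collar $N\setminus\mathrm{int}(N')$ on which $g=\pi^{\ast}(g|_{\Sigma})$ is already smooth.
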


\begin{proof}
Since the singular set of $W$ consists of a disjoint union of manifolds of co-dimension $2$, $|W|$ is
naturally a smooth manifold. Now given any $\varphi\in C^\infty(W;\s^1)$, it induces a continuous 
circle-valued function $\tilde{\varphi}$ on $|W|$. We denote by $[\varphi]$ the element in $H^1(|W|;\Z)$ 
determined by the homotopy class of $\tilde{\varphi}$. The correspondence $\varphi\mapsto [\varphi]$ clearly induces a mapping $\theta: \pi_0(C^\infty(W;\s^1))\rightarrow H^1(|W|;\Z)$. We will show that 
$\theta$ is both onto and one to one. 

Let $x\in H^1(|W|;\Z)$ be given. Since $|W|$ is a smooth manifold, there is a smooth circle-valued 
function $\tilde{\varphi}$ on $|W|$ which represents $x$. The pull back of $\tilde{\varphi}$ to $W$,
which we denote by $\varphi$, is smooth because locally the projection $W\rightarrow |W|$ is given 
by $(w,z)\mapsto (w,z^m)$ for some $m>1$, where $w\in \R^k$ and $z\in \C$. It is clear that 
$[\varphi]=x$, hence $\theta$ is onto. 

To see that $\theta$ is one to one, let $\varphi_1, \varphi_2\in C^\infty(W;\s^1)$ such that 
$[\varphi_1]=[\varphi_2]$. The induced continuous functions $\tilde{\varphi}_1$, $\tilde{\varphi}_2$
on $|W|$ are homotopic. We perturb $\tilde{\varphi}_1$, $\tilde{\varphi}_2$ into smooth functions 
$\tilde{\varphi}_{1+\epsilon}$, $\tilde{\varphi}_{2-\epsilon}$ on $|W|$ through a family of functions 
$\tilde{\varphi}_t$, with $1\leq t\leq 1+\epsilon$ and $2-\epsilon\leq t\leq 2$ respectively. The pull back of 
$\tilde{\varphi}_t$ to $W$ are continuous, but can be perturbed to a family of smooth functions on
the local uniformizing systems, which after averaging become equivariant with respect to the local group
actions on the uniformizing systems. In other words, $\varphi_1$, $\varphi_{1+\epsilon}$ and
$\varphi_2$, $\varphi_{2-\epsilon}$ are homotopic through smooth functions on $W$. Finally,
connect $\tilde{\varphi}_{1+\epsilon}$, $\tilde{\varphi}_{2-\epsilon}$ through a family of smooth 
functions $\tilde{\varphi}_t$ on $|W|$ with $1+\epsilon\leq t\leq 2-\epsilon$. Its pull back to $W$,
$\varphi_t$, are smooth functions on $W$. This shows that $\varphi_1, \varphi_2$ have the same
class in $\pi_0(C^\infty(W;\s^1))$, and therefore $\theta$ is one to one. 

\end{proof}

We consider natural actions of $H^1(T;\Z)$ on $S_{Y_{+}}^0$ and $S_{Y_{-}}^0$ defined as follows.
For any $e\in H^1(T;\Z)$, $(\xi^{+},h_{+})\in S_{Y_{+}}^0$, 
$$
e\cdot (\xi^{+},h_{+})=(\xi^{+},h_{+}+e),
$$
where $h_{+}+e$ denotes the homotopy class of isomorphisms $\xi^{+}|_{T}\rightarrow \xi_0$
which is given by $h_{+}$ followed by an automorphism of $\xi_0$ whose homotopy class 
is represented by $e$. Likewise, for any $e\in H^1(T;\Z)$, $(\xi^{-},h_{-})\in S_{Y_{-}}^0$, 
$$
e\cdot (\xi^{-},h_{-})=(\xi^{-},h_{-}+e). 
$$
With this understood, $H^1(|Y_{+}|;\Z)$ and $H^1(|Y_{-}|;\Z)$ act on $S_{Y_{+}}^0$ and 
$S_{Y_{-}}^0$ via homomorphisms $j_{+}: H^1(|Y_{+}|;\Z)\rightarrow H^1(T;\Z)$ and 
$j_{-}: H^1(|Y_{-}|;\Z)\rightarrow H^1(T;\Z)$, which are induced by the embedding of $T$ in 
$|Y_{+}|, |Y_{-}|$. As a consequence of
Lemma 2.1, we note that if $e\in j_{+}H^1(|Y_{+}|;\Z)$ (reps. $e\in j_{-}H^1(|Y_{-}|;\Z)$), then
for any $(\xi^{+},h_{+})\in S_{Y_{+}}^0$ (reps. $(\xi^{-},h_{-})\in S_{Y_{-}}^0$), there is an automorphism $\tilde{e}$ of $\xi^{+}$ (reps. $\xi^{-}$) such that $h_{+}\circ \tilde{e}|_{\xi^{+}|_{T}}=h_{+}+e$ (resp. $h_{-}\circ \tilde{e}|_{\xi^{-}|_{T}}=h_{-}+e$). We denote by $\underline{S}_{Y_{+}}$,
$\underline{S}_{Y_{-}}$ the quotient set of $S_{Y_{+}}^0$ and $S_{Y_{-}}^0$ under the action of
$H^1(|Y_{+}|;\Z)$ and $H^1(|Y_{-}|;\Z)$ respectively.  Finally, define an action of $H^1(T;\Z)$
on $\underline{S}_{Y_{+}}\times \underline{S}_{Y_{-}}$ by
$$
e\cdot ([(\xi^{+},h_{+})], [(\xi^{-},h_{-})])=([(\xi^{+},h_{+}+e)], [(\xi^{-},h_{-}+e)]), 
$$
where $e\in H^1(T;\Z), [(\xi^{+},h_{+})]\in \underline{S}_{Y_{+}},  [(\xi^{-},h_{-})]\in 
\underline{S}_{Y_{-}}$.

The same discussions apply to $Y_0$, and we obtain the set $\underline{S}_{Y_{0,-}}$ and 
a similarly defined action of $H^1(T;\Z)$ on $\underline{S}_{Y_{+}}\times \underline{S}_{Y_{0,-}}$.

\begin{lem}
There are natural identifications $\psi: \underline{S}_{Y_{+}}\times \underline{S}_{Y_{-}}/
H^1(T;\Z)\rightarrow S_Y$ and $\psi_0: \underline{S}_{Y_{+}}\times 
\underline{S}_{Y_{0,-}}/H^1(T;\Z)\rightarrow S_{Y_0}$.
\end{lem}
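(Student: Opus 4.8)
The plan is to define $\psi$ through the gluing construction and then show it is a bijection, with injectivity the essential point; $\psi_0$ is treated identically, with $Y_{0,-}$ in place of $Y_-$. Given representatives $(\xi^{+},h_{+})$ and $(\xi^{-},h_{-})$ of a class in $\underline{S}_{Y_{+}}\times\underline{S}_{Y_{-}}/H^1(T;\Z)$, I would form the gluing isomorphism $g:=h_{-}^{-1}\circ h_{+}:\xi^{+}|_T\rightarrow\xi^{-}|_T$ and glue $\xi^{+}$, $\xi^{-}$ along $T$ by $g$ to produce a $Spin^c$-structure $\psi(\cdots)$ on $Y=Y_{+}\cup_T Y_{-}$. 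This is manifestly canonical, so it only remains to verify bijectivity.

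First I would verify well-definedness against the three group actions. For the diagonal $H^1(T;\Z)$-action, both $h_{\pm}$ are post-composed with the same automorphism $e$ of $\xi_0$; since the automorphisms of $\xi_0$ form the abelian group $\pi_0(C^\infty(T;\s^1))\cong H^1(T;\Z)$, the two copies of $e$ cancel in $g=h_{-}^{-1}\circ h_{+}$, leaving the glued structure unchanged. For $a\in H^1(|Y_{+}|;\Z)$, Lemma 2.1 realizes the shift of $h_{+}$ by $j_{+}(a)$ as the restriction to $T$ of a genuine automorphism $\tilde a$ of $\xi^{+}$; extending $\tilde a$ by the identity over $Y_{-}$ gives an isomorphism between the two glued structures, so they coincide in $S_Y$, and the $H^1(|Y_{-}|;\Z)$-action is symmetric. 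Surjectivity is immediate: any $\xi\in S_Y$ restricts to $\xi^{\pm}:=\xi|_{Y_{\pm}}$ with some gluing isomorphism $g$, and since $\xi^{+}|_T\cong\xi_0$ I may choose $h_{+}$ arbitrarily and set $h_{-}:=h_{+}\circ g^{-1}$, so that $h_{-}^{-1}\circ h_{+}=g$ recovers $\xi$.

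The hard part is injectivity. I would fix a pair $(\xi^{+},\xi^{-})\in S_{Y_{+}}\times S_{Y_{-}}$, which is legitimate because any isomorphism of glued structures restricts to isomorphisms over $Y_{\pm}$ and so preserves these classes; after transporting by such isomorphisms I may assume the two structures share the same $\xi^{\pm}$ and differ only in their gluing data $(h_{+},h_{-})$ versus $(k_{+},k_{-})$. Writing $k_{\pm}=h_{\pm}+\alpha_{\pm}$ with $\alpha_{\pm}\in A:=H^1(T;\Z)$, an isomorphism $\Phi$ of the glued structures consists of automorphisms $\Phi_{\pm}$ of $\xi^{\pm}$ that are compatible over $T$. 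Expanding the compatibility relation $\Phi_{-}|_T\circ(h_{-}^{-1}h_{+})=(k_{-}^{-1}k_{+})\circ\Phi_{+}|_T$ and using that $\mathrm{Aut}(\xi_0)$ is abelian, together with Lemma 2.1 which identifies the classes $[\Phi_{\pm}|_T]$ with elements of $J_{\pm}:=j_{\pm}H^1(|Y_{\pm}|;\Z)$, reduces this to the single equation $\alpha_{+}-\alpha_{-}\in J_{+}+J_{-}$. Conversely this is exactly the condition that $(\alpha_{+},\alpha_{-})$ lie in the diagonal $A$-orbit modulo $(J_{+},J_{-})$, i.e.\ that the two classes agree in $\underline{S}_{Y_{+}}\times\underline{S}_{Y_{-}}/H^1(T;\Z)$. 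Thus $\psi$ is injective, and the same argument gives $\psi_0$.

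I expect the bookkeeping in the injectivity step — tracking homotopy classes of the gluing isomorphisms and their restrictions to $T$ — to be the main obstacle, with the two decisive inputs being the commutativity of $\mathrm{Aut}(\xi_0)$ and Lemma 2.1 (in the form that the restriction to $T$ of an automorphism of $\xi^{\pm}$ has class in $j_{\pm}H^1(|Y_{\pm}|;\Z)$). As a consistency check one can compute, over a fixed $(\xi^{+},\xi^{-})$, that both sides are torsors over $A/(J_{+}+J_{-})$: the left side is $(A/J_{+}\times A/J_{-})/A$ under the diagonal action, identified with $A/(J_{+}+J_{-})$ via $(x,y)\mapsto x-y$, while the right side is the corresponding fiber of $S_Y$ obtained from the Mayer--Vietoris sequence of $Y=Y_{+}\cup_T Y_{-}$.
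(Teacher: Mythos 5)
Your proposal is correct and follows essentially the same route as the paper: define $\psi$ by gluing along $h_{-}^{-1}\circ h_{+}$, get surjectivity by restricting a given $\xi$ and choosing compatible trivializations, and get injectivity by reducing to a fixed $\xi^{\pm}$ and absorbing the restrictions $[f_{\pm}|_T]$ (which lie in $j_{\pm}H^1(|Y_{\pm}|;\Z)$ by Lemma 2.1) into the gluing data, leaving a single diagonal shift by an element of $H^1(T;\Z)$. Your torsor description $A/(J_{+}+J_{-})$ is a correct reformulation of the same condition the paper arrives at.
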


\begin{proof}
The map $\psi$ is defined as follows. For any $((\xi^{+},h_{+}), (\xi^{-},h_{-}))\in 
{S}^0_{Y_{+}}\times {S}^0_{Y_{-}}$, we construct a $Spin^c$-structure on $Y$
by gluing $\xi^{+}$ and $\xi^{-}$ along $T$ via the isomorphism $h_{-}^{-1}\circ h_{+}: 
\xi^{+}|_{T}\rightarrow \xi^{-}|_{T}$, and assign this $Spin^c$-structure to 
$((\xi^{+},h_{+}), (\xi^{-},h_{-}))$. Call this map $\tilde{\psi}: {S}^0_{Y_{+}}\times {S}^0_{Y_{-}}
\rightarrow S_Y$. Clearly $\tilde{\psi}$ factors through 
$\underline{S}_{Y_{+}}\times \underline{S}_{Y_{-}}/H^1(T;\Z)$.

First, we check that $\tilde{\psi}$ is surjective onto $S_Y$. For any $\xi\in S_Y$, set $\xi^\pm 
\equiv \xi|_{Y_\pm}\in S_{Y_\pm}$.  Then $\xi^\pm |_T=\xi|_T$. We pick any $h_{+}:\xi^{+}|_T
\rightarrow \xi_0$, and let $h_{-}=h_{+}$ under the identification $\xi^{-}|_T=\xi^{+}|_T$. Then
it follows easily that $((\xi^{+},h_{+}), (\xi^{-},h_{-}))$ is sent to $\xi$ under $\tilde{\psi}$.

Next we check that $\tilde{\psi}$ is one to one after factoring through the quotient set. To this end,
let $((\xi^{+}_i,h_{+,i}), (\xi^{-}_i,h_{-,i}))$, $i=1,2$, be sent to the same image under
$\tilde{\psi}$. Then there are isomorphisms $f_\pm: \xi^{\pm}_1\rightarrow \xi^{\pm}_2$, such 
that 
$$
h_{-,2}^{-1}\circ h_{+,2}\circ f_{+}|_T=f_{-}|_T \circ h_{-,1}^{-1}\circ h_{+,1}.
$$
Without loss of generality, we may assume $\xi^{\pm}_1=\xi^{\pm}_2$, and consequently,
$f_{\pm}$ are given by elements of $C^\infty(Y_{\pm};\s^1)$. Then the class of $f_{\pm}|T$
in $H^1(T;\Z)$, $[ f_{\pm}|T]$, lies in the image $j_{\pm}: H^1(|Y_{\pm}|;\Z)\rightarrow H^1(T;\Z)$. Replacing $h_{+,2}$ by $ h_{+,2}\circ [f_{+}|_T]$, and $h_{-,1}$ by $[f_{-}|_T] \circ h_{-,1}$, which
give the same elements in $\underline{S}_{Y_{+}}$ and $\underline{S}_{Y_{-}}$ respectively,
we may assume instead 
$$
h_{-,2}^{-1}\circ h_{+,2}=h_{-,1}^{-1}\circ h_{+,1}.
$$
Let $e\in H^1(T;\Z)$ such that $h_{+,2}=h_{+,1}+e$, then $h_{-,2}=h_{-,1}+e$ also holds. This
finishes the proof that $\psi$ is one to one and onto. 

The definition of the map $\psi_0$ and the verification that $\psi_0$ is one to one and onto are completely analogous.

(We remark that Lemma 2.2 is analogous to Lemma 2.6 in Taubes \cite{T2}. However, the formulation
of Lemma 2.6 in \cite{T2} is ambiguous when there are $2$-torsions in the second cohomology.
We thank Cliff Taubes for clarifying this issue for us.)

\end{proof}

With Lemma 2.2, the promised map $\phi:S_Y\rightarrow S_{Y_0}$ will be defined through a
$H^1(T;\Z)$-equivariant map $\phi^\prime: \underline{S}_{Y_{-}}\rightarrow \underline{S}_{Y_{0,-}}$.
In order to define $\phi^\prime$, we first take a closer look at the set of $Spin^c$-structures on $Y_{-}$
and $Y_{0,-}$ respectively. 

As for $Y_{-}=D^2\times \s^1/\Z_\alpha$, an element of $S_{Y_{-}}$ may be regarded as a 
$\Z_\alpha$-equivariant $Spin^c$-structure on $D^2\times \s^1$. This said, there are $\alpha$ elements of $S_{Y_{-}}$, labelled by $\xi^{-}_\beta$, where $\beta=0,1,\cdots,\alpha-1$. It is more convenient to describe $\xi^{-}_\beta$ in terms of the associated $\Z_\alpha$-equivariant spinor bundle $S^{-}_\beta$ on $D^2\times \s^1$. With this understood, for $\beta=0$, $S^{-}_0$ is the 
$\Z_\alpha$-equivariant bundle on $D^2\times \s^1$ given by $\I\oplus K^{-1}_{D^2}$, where $\I$ 
is the trivial $\Z_\alpha$-equivariant complex line bundle and $K_{D^2}$ is the canonical bundle of $D^2$ which is regarded as a $\Z_\alpha$-equivariant complex line bundle on $D^2\times \s^1$. Moreover, for each $\beta\neq 0$, $S^{-}_\beta$ is the $\Z_\alpha$-equivariant bundle 
$S^{-}_0\otimes \underline{\C}_\beta$, where 
$\underline{\C}_\beta$ is the $\Z_\alpha$-equivariant complex line bundle on $D^2\times \s^1$ 
given by $D^2\times \s^1\times \C$ with the $\Z_\alpha$-action 
$$
\lambda\cdot (z, x, w)= (\lambda z,x, \lambda^\beta w), \;\;\;\lambda=\exp(2\pi i/\alpha), 
(z,x)\in D^2\times \s^1, w\in\C. 
$$

The set $S_{Y_{0,-}}$, $Y_{0,-}=D^2\times \s^1$, consists of only one element, denoted by 
$\xi^{0,-}$. 

\begin{lem}
For $(\xi^{-},h_{-})\in S_{Y_{-}}^0$, let $c(\xi^{-},h_{-})$ denote the first Chern class of the determinant
line bundle $\det \xi^{-}$ of $\xi^{-}$ equipped with the trivialization given by $\det h_{-}:
\det \xi^{-}|_T\rightarrow \det\xi_0$. (Here $c(\xi^{-},h_{-})$ is defined via Chern-Weil theory and lives
in the relative cohomology group $H^2(|Y_{-}|,T;\R)$.) Likewise, let $c(\xi^{0,-},h_{0,-})$ be the
first Chern class for $(\xi^{0,-},h_{0,-})\in S_{Y_{0,-}}^0$. Then
\begin{itemize}
\item $\int_{D^2\times \{pt\}} c(\xi^{0,-},h_{0,-})$ takes values in the set $\{2k+1|k\in\Z\}$.
\item For any $(\xi^{-},h_{-})\in S_{Y_{-}}^0$, $\int_{D^2\times \{pt\}} c(\xi^{-},h_{-})$ takes values 
in the set 
$$
\{2k+\frac{2\beta+1}{\alpha} |k\in\Z, \beta=0,1,\cdots, \alpha-1\}.
$$
\end{itemize}
\end{lem}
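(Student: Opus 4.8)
The plan is to reduce each integral to a winding-number computation on the meridian and to read off the answer from the explicit spinor bundles $S^{-}_\beta$. Recall that for a complex line bundle over a disc equipped with a nowhere-zero section $f$ over the boundary circle, the relative first Chern number $\int c_1(\cdot,f)$ is the degree by which $f$ winds relative to any section extending without zeros over the disc; the same holds over the orbifold disc $D^2/\Z_\alpha$ once one allows the fractional winding contributed by the cone point. From $S^{-}_\beta=(\I\oplus K^{-1}_{D^2})\otimes\underline{\C}_\beta$ one has $\det\xi^{-}_\beta=K^{-1}_{D^2}\otimes\underline{\C}_\beta^{\otimes 2}$, a $\Z_\alpha$-equivariant line bundle whose weight along the fixed circle is $1+2\beta$: indeed $K^{-1}_{D^2}=\langle\partial_z\rangle$ carries weight $+1$ under $z\mapsto\lambda z$, while $\underline{\C}_\beta^{\otimes 2}$ carries weight $2\beta$. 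This weight $1+2\beta$ — odd because the canonical bundle contributes the $1$ and the square $\underline{\C}_\beta^{\otimes 2}$ contributes the even part $2\beta$ — is the source of the numerator $2\beta+1$.

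First I would fix the explicit weight-$0$ boundary framing $\tilde\sigma=z^{-(1+2\beta)}\,\partial_z\otimes e^{\otimes 2}$, where $e$ is the constant frame of $\underline{\C}_\beta$; since its weight is $-(1+2\beta)+(1+2\beta)=0$, it is $\Z_\alpha$-invariant and descends to a nowhere-zero section over the image of $\partial D^2\times\{pt\}$ in $T$. Pulling back to the $\alpha$-fold branched cover $D^2\times\s^1\to Y_{-}$ and using that a curvature integral divides by $\alpha$, one gets
$$
\int_{(D^2/\Z_\alpha)\times\{pt\}}c(\xi^{-}_\beta,\tilde\sigma)=\frac{1}{\alpha}\int_{D^2\times\{pt\}}c_1\bigl(\det\xi^{-}_\beta,\tilde\sigma\bigr)=\frac{2\beta+1}{\alpha},
$$
the upstairs winding being that of $z^{-(1+2\beta)}$ against the disc-extending holomorphic frame $\partial_z\otimes e^{\otimes 2}$; the overall sign depends only on the orientation of $D^2\subset\C$ and is immaterial for the final set, since $\beta\mapsto\alpha-1-\beta$ exchanges $\pm(2\beta+1)/\alpha$ modulo $2$.

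Next I would compare $\tilde\sigma$ with the framing $\det h_{-}$ coming from $\xi_0$. The two differ by an $\s^1$-valued map of some integer winding $W_0$, so $\int c(\xi^{-}_\beta,h_{-})=\tfrac{2\beta+1}{\alpha}+W_0$, and the crux is that $W_0$ is \emph{even}. This holds because both framings are tensor squares of frames of the square-root line $L:=K^{-1/2}_{D^2}\otimes\underline{\C}_\beta$, which satisfies $L^{\otimes 2}=\det\xi^{-}_\beta$: the frame $\tilde\sigma$ equals $\bigl(z^{-(1+2\beta)/2}(\partial_z)^{1/2}\otimes e\bigr)^{\otimes 2}$, while the $\xi_0$-trivialization of $\det\xi^{-}_\beta$ is, by construction, the square of the trivialization of $L$ determined by $\xi_0$ (the reduction of $\xi^{-}_\beta|_T$ to $Spin(3)=SU(2)$ trivializes $\det$ as the square of a trivialization of $L$). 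Hence $W_0=2\cdot(\text{winding of the two frames of }L)$ is even. Finally, changing the homotopy class $h_{-}$ alters it by an automorphism of $\xi_0$, i.e.\ by an element of $H^1(T;\Z)$ acting on $\det$ through its square, shifting the integral by $2\langle\,\cdot\,,[\partial D^2]\rangle\in 2\Z$; thus $\int c(\xi^{-}_\beta,h_{-})$ is well defined modulo $2$ and lies in $\tfrac{2\beta+1}{\alpha}+2\Z$. Letting $\beta$ run over $0,\dots,\alpha-1$ yields exactly the asserted set, and the first bullet is the special case $\alpha=1$, $\beta=0$, giving $1+2\Z=\{2k+1\}$.

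The main obstacle is precisely the evenness of $W_0$, which is where $\xi_0$ genuinely enters. The $\Z_\alpha$-action on $\det\xi^{-}_\beta$ admits two lifts to the square root $L$ differing by $(-1)$ — this $\Z_2$ ambiguity is exactly a choice of $Spin$-structure along the meridian — and one must check that the lift singled out by $\xi_0$ (the product of the non-bounding $Spin$-structures, which does not extend over the solid torus in which $\partial D^2$ bounds) is the one making $\tilde\sigma$ and $\det h_{-}$ simultaneous squares. Equivalently, $z^{-(1+2\beta)/2}(\partial_z)^{1/2}\otimes e$ is anti-periodic around $\partial D^2$ (because $1+2\beta$ is odd), matching the non-bounding $Spin$-structure $\xi_0$; verifying this compatibility, together with fixing the orientation conventions so that the fractional part emerges as $+\tfrac{2\beta+1}{\alpha}$, is the only delicate point, the remaining steps being routine winding-number and branched-cover bookkeeping.
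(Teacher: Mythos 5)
Your argument is correct in substance, but it takes a genuinely different route from the paper's. You compute the second bullet directly on $Y_{-}$: an explicit $\Z_\alpha$-invariant boundary framing of $\det\xi^{-}_\beta=\underline{\C}_{2\beta+1}$ gives the fractional part $\pm(2\beta+1)/\alpha$ via the branched cover (this is the content of Lemma 3.6 of \cite{C0}, which the paper also uses, but only later in Proposition 2.6), and you then control the remaining integer ambiguity modulo $2$ by exhibiting both your framing and $\det h_{-}$ as squares of framings of a square-root line, with the non-bounding $Spin$-structure $\xi_0$ on the $\partial D^2$-direction singling out the anti-periodic square roots; the first bullet becomes the case $\alpha=1$. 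The paper instead proves the first bullet by comparing $\xi_0$ with a $Spin$-structure $\eta$ that extends over the solid torus (so the $\eta$-trivialization has relative Chern number $0$ and the difference class $\hat\eta$ evaluates to $1$ on $\partial D^2$), and then deduces the second bullet by gluing $Y_{-}$ to $Y_{+}=D^2\times\s^1$ inside the closed orbifold $\Sigma\times\s^1$ ($\Sigma$ the teardrop), computing the total degree $2b+1+\tfrac{2\beta+1}{\alpha}$ from Seifert invariants, and subtracting the odd integer contributed by $Y_{+}$. What your approach buys is a self-contained local computation with no auxiliary closed orbifold and an explicit identification of which $\beta$ produces which fractional part; what it costs is exactly the convention-sensitive step you flag at the end (anti-periodic square roots correspond to the non-bounding $Spin$-structure $\xi_0$, not the bounding one), which is the same parity-of-winding fact the paper needs only once, for honest $Spin$-structures on the solid torus, where it is immediate from the description of $\xi_0$ given in Section 2. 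That step does go through as you describe (since $2\beta+1$ is odd, $z^{-(2\beta+1)/2}(\partial_z)^{1/2}\otimes e$ is a periodic multiple of the non-bounding frame $z^{1/2}(\partial_z)^{1/2}\otimes e$), so there is no gap, but it should be carried out rather than merely identified as the delicate point; also note that the restriction $\xi^{-}_\beta|_T$ is a $Spin^c(2)$-structure on the $2$-torus $T$, so the reference to $Spin(3)=SU(2)$ is a slip, and the reduction to $Spin$ comes from the isomorphism $h_{-}$ with $\xi_0$ rather than from $\xi^{-}_\beta|_T$ itself.
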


\begin{proof}
Consider $\int_{D^2\times \{pt\}} c(\xi^{0,-},h_{0,-})$, $\forall (\xi^{0,-},h_{0,-})\in S_{Y_{0,-}}^0$, first.
Observe that $\xi^{0,-}$ admits a reduction to a $Spin$-structure on $Y_{0,-}=D^2\times \s^1$.
We fix any such a $Spin$-structure $\eta$, and denote by $h: \xi^{0,-}|_T\rightarrow \eta|_T$ the
corresponding isomorphism on $T$. Then $\eta$ gives rise to a trivialization of $\det \xi^{0,-}$ on $Y_{0,-}$, with the trivialization of $\det \xi^{-,0}|_T$ given by 
$\det h: \det\xi^{0,-}|_T\rightarrow \det\eta|_T$. With this understood, $\int_{D^2\times \{pt\}} c(\xi^{0,-},h)=0$,
and 
$$
\int_{D^2\times \{pt\}} c(\xi^{0,-},h_{0,-})=\int_{D^2\times \{pt\}} c(\xi^{0,-},h)+
\det h_{0,-}\circ \det h^{-1} ([\partial D^2]),
$$
where $\det h_{0,-}\circ \det h^{-1}$ is regarded as an element  of $H^1(T;\Z)$ and $\partial D^2$
is oriented as boundary of $D^2$. 
Since $\eta|_T$ extends to $\eta$ over $D^2\times \s^1$, the difference between 
the $Spin$-structures $\eta_T$ and
$\xi_0$ is given by a class $\hat{\eta}\in H^1(T;\Z_2)$ which evaluates nontrivially on $\partial D^2$.
Consequently, $\det h_{0,-}\circ \det h^{-1} ([\partial D^2])=\hat{\eta}([\partial D^2])=1 \pmod{2}$,
and $\int_{D^2\times \{pt\}} c(\xi^{0,-},h_{0,-})=2k+1$ for some $k\in\Z$.

Next we compute $\int_{D^2\times \{pt\}} c(\xi^{-},h_{-})$,  $\forall (\xi^{-},h_{-})\in S_{Y_{-}}^0$.
To this end, we introduce an oriented $3$-orbifold $\Sigma\times \s^1$, 
where $\Sigma$ is the $2$-orbifold whose underlying $2$-manifold 
$|\Sigma|=\s^2$ and $\Sigma$ has exactly one singular point of multiplicity 
$\alpha$. It is easily seen that $\Sigma\times \s^1$ admits a decomposition as a union of 
$Y_{+}$ and $Y_{-}$ along the common boundary $T$ with $Y_{+}=D^2\times \s^1$.

There is a canonical $Spin^c$-structure $\xi^0$ on $\Sigma\times \s^1$ whose associated spinor bundle $S^0$ is given by $\I\oplus K^{-1}_{\Sigma}$. With this canonical $Spin^c$-structure $\xi^0$,
any $Spin^c$-structure $\xi$ on $\Sigma\times\s^1$ corresponds to an orbifold complex line bundle $E$ (always a pull-back from $\Sigma$) such that the associated spinor bundle $S$ is given
by $S^0\otimes E$. Let $(b, (\alpha,\beta))$, where $b\in \Z$ and $0\leq \beta<\alpha$, be the
Seifert invariant of $E$. Then
$$
c_1(\det \xi)(|\Sigma|)= 2b+\frac{2\beta}{\alpha}+ 2+(\frac{1}{\alpha}-1)
=2b+1+\frac{2\beta+1}{\alpha}.
$$

Now we apply the identification $\psi$ in Lemma 2.2 to $Y=\Sigma\times \s^1$, and via $\psi$ we identify $\xi$ to a pair of elements $(\xi^{+},h_{+})$, $(\xi^{-},h_{-})$ (note that
$\xi^{-}=\xi^{-}_\beta$ by construction). Then
$$
2b+1+\frac{2\beta+1}{\alpha}=c_1(\det \xi)(|\Sigma|)=\int_{D^2\times \{pt\}} c(\xi^{+},h_{+})+\int_{D^2\times \{pt\}} c(\xi^{-},h_{-}).
$$
By the previous calculation, $\int_{D^2\times \{pt\}} c(\xi^{+},h_{+})=2s+1$, $s\in\Z$. Thus
$$
\int_{D^2\times \{pt\}} c(\xi^{-},h_{-})\equiv \int_{D^2\times \{pt\}} c(\xi^{-}_\beta,h_{-})
=2k+\frac{2\beta+1}{\alpha}, \; k\in\Z.
$$
This completes the proof of the lemma.

\end{proof}

Note that the correspondences 
$$
(\xi^{0,-},h_{0,-})\mapsto \int_{D^2\times \{pt\}} c(\xi^{0,-},h_{0,-}),\;\;
(\xi^{-},h_{-})\mapsto \int_{D^2\times \{pt\}} c(\xi^{-},h_{-})
$$
factor through the quotient sets $\underline{S}_{Y_{0,-}}$ and $\underline{S}_{Y_{-}}$. This gives
definitions of the values
$$
c([(\xi^{0,-},h_{0,-})])\equiv \int_{D^2\times \{pt\}} c(\xi^{0,-},h_{0,-}),\;\;
c([(\xi^{-},h_{-})])\equiv \int_{D^2\times \{pt\}} c(\xi^{-},h_{-}),
$$
which allow us to distinguish elements of $\underline{S}_{Y_{0,-}}$ and $\underline{S}_{Y_{-}}$,
because the group of orbifold complex line bundles on $Y_{-}$ with a fixed trivialization on the boundary is torsion-free. 

With the preceding understood, the definitions of the maps $\phi^\prime$ and $\phi$ are given
as follows.

\begin{defn}
(1) Recall that the elements of $S_{Y_{-}}$ are labelled by $\xi^{-}_\beta$, 
$0\leq \beta< \alpha$. There are two characteristics of $\xi^{-}_\beta$,
any one of which uniquely determines $\xi^{-}_\beta$: (i) the spinor bundle of $\xi^{-}_\beta$
is given by the $\Z_\alpha$-equivariant bundle $(\I\oplus K^{-1}_{D^2})\otimes \underline{\C}_\beta$
on $D^2\times \s^1$, and (ii) (cf. the proof of Lemma 2.3)
$$
c([(\xi^{-}_\beta, h_{-})])=2k+\frac{2\beta+1}{\alpha}, \;\; k\in\Z.
$$
With this understood, we define 
$\phi^\prime:\underline{S}_{Y_{-}}\rightarrow \underline{S}_{Y_{0,-}}$ which is uniquely
determined by the following condition: for any $[(\xi^{-}_\beta, h_{-})]\in \underline{S}_{Y_{-}}$,
$\phi^\prime([(\xi^{-}_\beta, h_{-})])\in  \underline{S}_{Y_{0,-}}$ such that if 
$c([(\xi^{-}_\beta, h_{-})])=2k+\frac{2\beta+1}{\alpha}$ for some $k\in \Z$ uniquely picked out
by $h_{-}$, we require $\phi^\prime([(\xi^{-}_\beta, h_{-})])$ to satisfy 
$$
c(\phi^\prime([(\xi^{-}_\beta, h_{-})]))=2k +1.
$$
The map $\phi^\prime$ is clearly $H^1(T;\Z)$-equivariant.

(2) With the identifications $\psi, \psi_0$ in Lemma 2.2, the map $\phi:S_Y\rightarrow S_{Y_0}$ 
is defined to be the one induced by the $H^1(T;\Z)$-equivariant map from $\underline{S}_{Y_{+}}\times \underline{S}_{Y_{-}}$ to $\underline{S}_{Y_{+}}\times \underline{S}_{Y_{0,-}}$ given by
$$
([(\xi_{+},h_{+})], [(\xi_{-},h_{-})])\mapsto ([(\xi_{+},h_{+})], \phi^\prime ([(\xi_{-},h_{-})])).
$$
\end{defn}

\begin{rem}
We remark that although the definition of $\phi$ involves fixing a number of auxiliary data, such
as an orientation of $l$ and an orientation of $T$, as the identifications 
$Y_{-}=D^2\times \s^1/\Z_\alpha$ and $Y_{0,-}=D^2\times \s^1$, the reduction of a 
$Spin^c$-structure in a neighborhood of $T$ to a $Spin^c$-structure on $T$, as well
as the definitions of $c([(\xi^{0,-},h_{0,-})])$ and $c([(\xi^{-},h_{-})])$ all require it, 
one can easily verify that reversing the orientation of $l$ or $T$ gives rise to the same map $\phi$. 
Hence $\phi$ is intrinsically defined. However, take note that the naming of $\xi^{-}_\beta$ 
depends on the choice of orientation of $l$.
\end{rem}

For any $\xi\in S_Y$, the determinant line bundle of $\xi$ can be nicely related to the determinant
line bundle of $\phi(\xi)$. Such a relation between $\det \xi$ and $\det \phi(\xi)$ is useful in 
computations. 

In order to state the said relation between $\det \xi$ and $\det \phi(\xi)$, we need to introduce some
notations. For each $k\in \Z$, we denote by $E_k$ the orbifold complex line bundle on $Y$ defined
as follows. One first defines $E_k$ on $Y_{-}=D^2\times \s^1/\Z_\alpha$ by the $\Z_\alpha$-equivariant complex line bundle $\underline{\C}_k$ on $D^2\times \s^1$, i.e., the one given by $D^2\times \s^1\times\C$ with the $\Z_\alpha$-action
$$
\lambda\cdot (z, x, w)= (\lambda z,x, \lambda^k w), \;\;\;\lambda=\exp(2\pi i/\alpha), 
(z,x)\in D^2\times \s^1, w\in\C. 
$$
This orbifold complex line bundle has a canonical trivialization on $\partial Y_{-}=\partial D^2\times \s^1$ given by the $\Z_\alpha$-equivariant nonzero section $(z,x)\mapsto z^k$. The bundle $E_k$
is defined over the rest of $Y$ by extending this trivialization. Observe two useful facts about $E_k$:
(1) $E_k$ descends to a complex line bundle on $|Y|$ if and only if $k$ is divisible by
$\alpha$, in which case the descendant of $E_k$ to $|Y|$ has $c_1$ given by $\frac{k}{\alpha}\cdot
PD(l)$ in $H^2(|Y|;\Z)$, and (2) when $l$ is non-torsion, the first Chern class of $E_k$ as an 
element of $H^2(|Y|;\R)$ is given by $\frac{k}{\alpha}\cdot PD(l)$ (cf. \cite{C0}, Lemma 3.6).

With the preceding understood, we have 

\begin{prop}
Given any $\xi\in S_Y$, suppose $\xi|_{Y_{-}}$ is given by $\xi^{-}_\beta$ for some 
$\beta=\beta_\xi$, $0\leq \beta<\alpha$. Then 
$$
\det \phi(\xi)=\det \xi \otimes E_{\alpha-2\beta_\xi-1}.
$$
\end{prop}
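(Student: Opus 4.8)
The plan is to read the asserted equation as an isomorphism of orbifold complex line bundles on $Y_0$, and to verify it by comparing the two sides over $Y_{+}$ and over a neighborhood of $l$ separately, the latter comparison being reduced to the single relative Chern number used already in Lemma 2.3.

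First I would record that $Y$ and $Y_0$ share the piece $Y_{+}$ and differ only in replacing $Y_{-}$ by $Y_{0,-}$. Over $Y_{+}$ the bundle $E_{\alpha-2\beta_\xi-1}$ is topologically trivial, via the section extending $z^{\alpha-2\beta_\xi-1}$ used to define it, and by construction $\phi$ leaves the $Y_{+}$-component $(\xi_{+},h_{+})$ of $\xi$ unchanged. Hence both $\det\phi(\xi)$ and $\det\xi\otimes E_{\alpha-2\beta_\xi-1}$ restrict over $Y_{+}$ to $\det\xi|_{Y_{+}}$ as bundles, so it remains only to compare the two sides over a neighborhood of $l$.

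Next I would check that $\det\xi\otimes E_{\alpha-2\beta_\xi-1}$ really is a genuine orbifold line bundle on $Y_0$, i.e. has trivial $\Z_\alpha$-holonomy along $l$. Writing $\beta=\beta_\xi$ and using $S^{-}_\beta=(\I\oplus K^{-1}_{D^2})\otimes\underline{\C}_\beta$, one computes $\det\xi^{-}_\beta=K^{-1}_{D^2}\otimes\underline{\C}_\beta^{\otimes 2}$, which as a $\Z_\alpha$-equivariant line bundle on $D^2\times\s^1$ is $\underline{\C}_{2\beta+1}$, since the rotation acts on $T_0 D^2$ by $\lambda=\exp(2\pi i/\alpha)$ and so $K^{-1}_{D^2}=TD^2\cong\underline{\C}_1$. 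Tensoring with $E_{\alpha-2\beta-1}|_{Y_{-}}=\underline{\C}_{\alpha-2\beta-1}$ yields $\underline{\C}_{2\beta+1}\otimes\underline{\C}_{\alpha-2\beta-1}=\underline{\C}_\alpha$, on which $\lambda$ acts trivially on the fibre; hence this bundle is equivariantly trivial and descends to an honest line bundle on $Y_{0,-}=D^2\times\s^1$. Thus both sides of the asserted equation lie in the group of orbifold line bundles on $Y_0$ and may be compared directly.

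Finally I would invoke the remark following Lemma 2.3 that, once a boundary trivialization is fixed, this group is torsion-free and its elements are distinguished by the relative Chern number $\int_{D^2\times\{pt\}}c(\cdot)$; since the two bundles already agree over $Y_{+}$, it suffices to match this number. By Lemma 2.3 and the definition of $\phi^\prime$ we have $\int_{D^2\times\{pt\}}c(\det\phi(\xi))=2k+1$ whenever $\int_{D^2\times\{pt\}}c(\det\xi)=2k+\frac{2\beta+1}{\alpha}$, while the construction of $E_k$ through the trivializing section $z^k$ (cf. the properties of $E_k$ recorded above) gives $\int_{D^2\times\{pt\}}c(E_{\alpha-2\beta-1})=\frac{\alpha-2\beta-1}{\alpha}$. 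Adding these,
$$
\int_{D^2\times\{pt\}}c(\det\xi\otimes E_{\alpha-2\beta-1})
=\left(2k+\frac{2\beta+1}{\alpha}\right)+\frac{\alpha-2\beta-1}{\alpha}
=2k+1=\int_{D^2\times\{pt\}}c(\det\phi(\xi)).
$$
Together with the agreement over $Y_{+}$ and the descent established above, this forces the two bundles to be isomorphic. I expect the only real obstacle to be the bookkeeping of boundary framings: one must verify that the framing $\det h_{-}\otimes z^{\alpha-2\beta-1}$ induced on $\det\xi\otimes E_{\alpha-2\beta-1}$ is precisely the one selecting the value $2k+1$ in the definition of $\phi^\prime$, and that the resulting identity is independent of the auxiliary choices as in Remark 2.5. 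The $H^1(T;\Z)$-equivariance of $\phi^\prime$ should absorb the dependence on the chosen representative $h_{-}$.
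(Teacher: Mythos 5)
Your proposal is correct and follows essentially the same route as the paper: identify $\det\xi^{-}_\beta$ with $\underline{\C}_{2\beta+1}$, observe that tensoring with $E_{\alpha-2\beta-1}|_{Y_{-}}=\underline{\C}_{\alpha-2\beta-1}$ yields the equivariantly trivial $\underline{\C}_{\alpha}$, and then match the relative first Chern number $2k+1$ against the defining property of $\phi^\prime$, using that elements of $\underline{S}_{Y_{0,-}}$ are distinguished by $c$. The framing bookkeeping you flag at the end is exactly the step the paper handles by identifying the trivialization $\det h_{-}$ with the one given by the section $z^{2\beta+1}$, so that the induced framing on the tensor product is the canonical $z^{\alpha}$ one; your Chern-number additivity argument accomplishes the same thing.
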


\begin{proof}
We use $\psi$ in Lemma 2.2 to identify $\xi$ with the orbit of $([(\xi^{+}, h_{+})],
[(\xi^{-}_\beta, h_{-})])$. Then $\det \xi$ is given by $([(\det \xi^{+}, \det h_{+})],
[(\det \xi^{-}_\beta, \det h_{-})])$, where $\det h_{+}: \det \xi^{+}|_T \rightarrow \det \xi_0$ and
$\det h_{-}: \det \xi^{-}_\beta|_T \rightarrow \det \xi_0$. Moreover, we normalize our choice by
requiring that 
$$
c([(\xi^{-}_\beta, h_{-})])=\frac{2\beta+1}{\alpha}.
$$

With the preceding understood, recall that  the spinor bundle of $\xi^{-}_\beta$ is given by
$(\I\oplus K^{-1}_{D^2})\otimes \underline{\C}_\beta$. Thus $\det \xi^{-}_\beta=K^{-1}_{D^2}\otimes
\underline{\C}_{2\beta}$. Noticing that $K^{-1}_{D^2}=\underline{\C}_1$, so that 
$\det \xi^{-}_\beta =\underline{\C}_{2\beta+1}$,  we find that the trivialization of $\det \xi^{-}_\beta$ determined by the $\Z_\alpha$-equivariant non-zero section $(z,x)\mapsto z^{2\beta+1}$ gives
the same evaluation of the relative first Chern class of $\det \xi^{-}_\beta$ on $D^2\times \{pt\}$
as that determined by $\det h_{-}$  (cf. Lemma 3.6 of \cite{C0}). Hence the trivialization of 
$\det h_{-}$ may be identified with the trivialization given by the non-zero section 
$(z,x)\mapsto z^{2\beta+1}$.

With this understood, note that $\det \xi \otimes E_{\alpha-2\beta-1}$ is given by the pair 
$$
([(\det \xi^{+}, \det h_{+})], [(\det \xi^{-}_\beta\otimes E_{\alpha-2\beta-1}|_{Y_{-}}, h)]),
$$
where, with $\det \xi^{-}_\beta\otimes E_{\alpha-2\beta-1}|_{Y_{-}}=\underline{\C}_\alpha$, 
$h: (\det \xi^{-}_\beta\otimes E_{\alpha-2\beta-1}|_{Y_{-}})|_T \rightarrow \det \xi_0$ is given by
the $\Z_\alpha$-equivariant non-zero section $(z,x)\mapsto z^\alpha$, $(z,x)\in \partial D^2\times
\s^1=T$.  It follows that 
$$
\int_{D^2\times \{pt\}} c_1(\det \xi^{-}_\beta\otimes E_{\alpha-2\beta-1}|_{Y_{-}},h)=1=
c(\phi^\prime ([(\xi^{-}_\beta, h_{-})])),
$$
which implies that $\det \phi(\xi)=\det \xi \otimes E_{\alpha-2\beta_\xi-1}$ as claimed. 
Hence the proposition.

\end{proof}

Now we shall proceed to prove Theorem 1.1. Our strategy is to consider $4$-orbifolds 
$M=\s^1\times Y$ and $M_0=\s^1\times Y_0$. By a theorem of Donaldson \cite{D}, the
Seiberg-Witten invariant of $M$ for a $Spin^c$-structure $\L$ is non-zero only if $\L$ is the 
pull-back of a $Spin^c$-structure $\xi$ on $Y$, and in this case one has
$$
SW_M(\L)=SW_Y(\xi).
$$
The same thing is true for $M_0$ and $Y_0$. Thus, we shall consider the subset $S_M$ of 
$Spin^c$-structures on $M$ which are pull-backs of a $Spin^c$-structure on $Y$, and likewise,
consider the subset $S_{M_0}$ of $Spin^c$-structures on $M_0$ that are pull-backs of a 
$Spin^c$-structure on $Y_0$.  Note that $S_M,S_{M_0}$ may be canonically identified with $S_Y,S_{Y_0}$, so that there is correspondingly a surjective, $\alpha:1$ map, also 
denoted by $\phi$, from $S_M$ to $S_{M_0}$. Theorem 1.1 follows if one shows that 
$$
SW_M (\L)=SW_{M_0}(\phi(\L)), \;\; \forall \L\in S_M.
$$

By setting $M_{+}=\s^1\times Y_{+}$, $M_{-}=\s^1\times Y_{-}$, and $M_{0,-}=\s^1\times Y_{0,-}$,
and $N=\s^1\times T$ which is a $3$-torus,  one similarly obtains decompositions of $M$ as a union
of $M_{+}$ with $M_{-}$ along $N$, and $M_0$ as a union of $M_{+}$ with $M_{0,-}$ along $N$.

There are similarly defined sets $\underline{S}_{M_{+}}$, $\underline{S}_{M_{-}}$ and 
$\underline{S}_{M_{0,-}}$, equipped with group actions by $H^1(N;\Z)$, which are canonically
identified with the sets $\underline{S}_{Y_{+}}$, $\underline{S}_{Y_{-}}$ and 
$\underline{S}_{Y_{0,-}}$. The actions by $H^1(N;\Z)$ and $H^1(T;\Z)$ are compatible with
the understanding that $H^1(T;\Z)$ embeds in $H^1(N;\Z)$ as a subgroup which is induced by the projection $N=\s^1\times T\rightarrow T$. Moreover, there are identifications, still denoted by 
$\psi,\psi_0$,
$$
\psi: \underline{S}_{M_{+}}\times \underline{S}_{M_{-}}/H^1(N;\Z)\rightarrow S_M \mbox{ and }
\psi_0: \underline{S}_{M_{+}}\times \underline{S}_{M_{0,-}}/H^1(N;\Z)\rightarrow S_{M_0},
$$
and a $H^1(N;\Z)$-equivariant map $\phi^\prime: \underline{S}_{M_{-}}\rightarrow 
\underline{S}_{M_{0,-}}$, such that $\phi: S_M\rightarrow S_{M_0}$ is given by 
$Id\times \phi^\prime/ H^1(N;\Z)$.  We continue to use the notations introduced in the 
$3$-dimensional setting for the corresponding objects in the $4$-dimensional setting.

With the preceding understood, the equations 
$$
SW_M (\L)=SW_{M_0}(\phi(\L)), \;\; \forall \L\in S_M,
$$
will be derived from the gluing theorems of Seiberg-Witten invariants along $T^3$, as developed in
Morgan, Mrowka and Szab\'{o} \cite{MMS}, as well as in Taubes \cite{T2}. This said, we actually 
need the extensions of the gluing theorems to $4$-orbifolds. Since the singular sets of the $4$-orbifolds are all lying in the complement of the gluing region, there are no essential complications in the analysis involved. 

We remark that each of the gluing theorems has its own limitations. For instance, Taubes' gluing
theorem requires that the $3$-torus $N$ be essential (in the sense of \cite{T2}), and 
Morgan-Mrowka-Szab\'{o}'s gluing theorem needs the assumption that $b^{+}_2(M_{+})>0$.
Thus the proof of Theorem 1.1 is given separately in two cases.
\begin{itemize}
\item [{(i)}] $b^{+}_2(M_{+})>0$, which means that $H_1(|Y_{+}|,T;\Z)\neq 0$. In the case of
$b_1(Y)=1$, this condition is equivalent to $l$ is torsion in $H_1(|Y|;\Z)$.
\item [{(ii)}] $N$ is essential, which is equivalent to the condition that $l$ is non-torsion in
$H_1(|Y|;\Z)$.
\end{itemize}

Note that Case (i) and Case (ii) have overlaps, which means that for these cases, the gluing 
theorems of Morgan-Mrowka-Szab\'{o} and Taubes give independent proofs for Theorem 1.1.

\section{Proof of Theorem 1.1: case (i)}

Let $\L\in S_M$ be given. Via $\psi$ in Lemma 2.2 we identify $\L$ with a pair of elements 
$(\xi^{+}, h_{+})$, $(\xi^{-}_\beta, h_{-})$, where $\xi^{+}=\L|_{M_{+}}$, $\xi^{-}_\beta=
\L|_{M_{-}}$ for some $0\leq \beta<\alpha$, and $h_{+}: \xi^{+}|_N\rightarrow \xi_0$,
$h_{-}: \xi^{-}_\beta|_N\rightarrow \xi_0$ are isomorphisms. Recall that this identification means
that $\L$ is obtained by gluing $\xi^{+}$ with $\xi^{-}_\beta$ along $N$ via $h_{-}^{-1}\circ h_{+}$.
We shall fix $h_{+}$ throughout, and in doing so, $h_{-}$ is uniquely determined up to the
actions by $H^1(|M_{+}|;\Z)$ and $H^1(|M_{-}|;\Z)$. Let $\Lambda=\{h_{-,i}\}$ be a set of elements
of such $h_{-}$'s each of which represents an orbit of the $H^1(|M_{-}|;\Z)$-action. Furthermore,
we shall normalize our choice of $h_{+}$ by requiring that there is a $h_{-,0}\in\Lambda$
such that $c([(\xi^{-}_\beta,h_{-,0})])=(2\beta+1)/\alpha$.

The set $\Lambda$ has the following properties. Let $e_1$ be the vector in $H^1(N;\Z)$ 
which is the Poincar\'{e} dual of $\{pt\}\times T^2$ in $N$. Then if $l$ is torsion of order $m$, the image $j_{+} H^1(|M_{+}|;\Z)\subset
H^1(N;\Z)$ is generated by $me_1$. If $l$ is non-torsion, then no non-zero multiples of $e_1$ is 
contained in $j_{+} H^1(|M_{+}|;\Z)$. With this understood, in the former case we have
$c([(\xi^{-}_\beta,h_{-})])=(2\beta+1)/\alpha \pmod{2m}, \forall h_{-}\in\Lambda$, and in the
latter case, $\Lambda$ consists of a single element $h_{-}$ with 
$c([(\xi^{-}_\beta,h_{-})])=(2\beta+1)/\alpha$. 

Analogously, via $\psi_0$ in Lemma 2.2 we identify $\phi(\L)$ with a pair of elements 
$(\xi^{+}, h_{+})$, $(\xi^{0,-}, h_{0,-})$, where $(\xi^{+}, h_{+})$ is the one chosen in the 
previous paragraph, and $h_{0,-}: \xi^{0,-}|_N\rightarrow \xi_0$ is an isomorphism chosen
out of a set $\Lambda_0$, which is given by the image of the set $\Lambda$ under the
map $\phi^\prime$. When $l$ is torsion of order $m$, $c([(\xi^{0,-},h_{0,-})])
=1 \pmod{2m}, \;\forall h_{0,-}\in\Lambda_0$, and if $l$ is non-torsion, $\Lambda_0$ consists of
a single element $h_{0,-}$ with $c([(\xi^{0,-},h_{0,-})])=1$. 

Finally, the Dirac operator $D$ associated to the $Spin$-structure $\xi_0$ has $\ker D=\C^2$,
which consists of covariantly constant sections. This gives rise to a trivialization of $\xi_0$.
Viewing differently, if we regard $\xi_0$ as a $Spin^c$-structure, then it follows that there is a 
special
flat $U(1)$-connection $\theta_0$ on $\det \xi_0$, such that the associated Dirac operator
$D_{\theta_0}$ has $\ker D_{\theta_0}=\C^2$. This gives rise to an identification of the space of
equivalence classes of flat $U(1)$-connections on $\det \xi_0$ modulo homotopically trivial
gauge transformations with the space $H^1(N;\R)$ via harmonic $1$-forms, under which 
$\theta_0$ is identified with the origin $0\in H^1(N;\R)$. See \cite{MMS} for more details.

With the preceding understood, we now review the gluing theorem of Seiberg-Witten invariant of Morgan, Mrowka and Szab\'{o} in \cite{MMS} (adapted to the present orbifold context).  The key component of this analysis is the structure of $L^2$-moduli spaces of Seiberg-Witten equations 
on $4$-manifolds (or more generally $4$-orbifolds) with cylindrical ends $T^3 \times [0,\infty)$. We consider $M_{+}$ first, which is an oriented $4$-orbifold with boundary $N=T^3$. We endow 
$M_{+}$ with a Riemannian metric $g$ that is flat near $N$ and attach $N \times [0,\infty)$ to it extending the metric $g$ naturally. Call the resulting cylindrical-end Riemannian $4$-orbifold 
$(\hat{M}_{+}, \hat{g})$. 

With $\L\in S_M$ given, $\xi^{+}\in S_{M_{+}}$ being the restriction of $\L$ to $M_{+}$, we extend
$\xi^{+}$ naturally to a $Spin^c$-structure on $\hat{M}_{+}$. With this understood, one considers 
the $L^2$-moduli space of Seiberg-Witten equations 
$$
\left\{ \begin{array}{c}
{D_A}\psi=0 \\
P_{+}F_A=\tau(\psi\otimes\psi^\ast)-i\mu
\end{array}
\right .
$$
where $\mu$ is a compactly supported real-valued smooth self-dual $2$-form. The $L^2$-moduli 
space, denoted by $\M_{\hat{M}_{+}}(\xi^{+}, \hat{g}, \mu)$,  is the space of $(A,\psi)$ modulo
gauge transformations by the group $C^\infty (\hat{M}_{+};\s^1)$, where $(A,\psi)$ satisfies the
finite energy condition 
$$
\int_{\hat{M}_{+}} |F_A|^2 dvol<\infty.
$$

Let $\chi_0(N)$ be the space of flat $U(1)$-connections on $\det \xi^{+}|_N$ modulo gauge 
transformations by those $\varphi\in C^{\infty}(N;\s^1)$ where $\varphi$ is the restriction of 
an element of $C^\infty (\hat{M}_{+};\s^1)$. There is a continuous map 
$\partial_\infty: \M_{\hat{M}_{+}}(\xi^{+}, \hat{g}, \mu)\rightarrow \chi_0(N)$ which sends the gauge equivalence class of $(A,\psi)$ to the class of the asymptotic value of $A|_{N\times \{t\}}$ as
$t\rightarrow \infty$. With the isomorphism $h_{+}:\xi^{+}|_N \rightarrow \xi_0$ chosen, $\chi_0(N)$
is identified with $H^1(N;\R)/j_{+}2H^1(|M_{+}|;\Z)$ through $\det h_{+}: \det \xi^{+}|_N 
\rightarrow \det \xi_0$, as the corresponding space of flat $U(1)$-connections on $\det \xi_0$ is identified with $H^1(N;\R)/j_{+} 2H^1(|M_{+}|;\Z)$ via harmonic $1$-forms.
As $h_{+}$ is being fixed throughout, we shall regard 
$\partial_\infty$ as a map from $M_{\hat{M}_{+}}(\xi^{+}, \hat{g}, \mu)$ into 
$H^1(N;\R)/j_{+}2H^1(|M_{+}|;\Z)$. 

With the preceding understood, the structure theorem of Morgan, Mrowka and Szab\'{o} (cf. \cite{MMS}, Theorem 2.8) asserts that for a generic choice of $\mu$, the $L^2$-moduli space 
$\M_{\hat{M}_{+}}(\xi^{+}, \hat{g}, \mu)$ is a compact $1$-dimensional manifold with boundary, 
such that the image of $\partial \M_{\hat{M}_{+}}(\xi^{+}, \hat{g}, \mu)$ under the map 
$\partial_\infty$ lies in the lattice of points of even integer coordinates in 
$H^1(N;\R)/j_{+} 2H^1(|M_{+}|;\Z)$. We remark that this assertion requires the condition that
$b_2^{+}(M_{+})>0$, which is assumed to be true. Furthermore, we used the fact that the 
dimension of the moduli space $\M_\L$ of Seiberg-Witten equations associated to the 
$Spin^c$-structure $\L$ is $0$. (This follows easily from the fact that $\L$ is the pull-back of a 
$Spin^c$-structure on $Y$.) 
Indeed, according to \cite{MMS}, the dimension of $\M_{\hat{M}_{+}}(\xi^{+}, \hat{g}, \mu)$ 
is given by $\dim \M_\L +1$ (cf. Theorem 2.4 in \cite{MMS}), which equals $1$ since 
$\dim \M_\L=0$.

Similarly, we consider the $L^2$-moduli space on $\hat{M}_{-}$, 
$\M_{\hat{M}_{-}}(\xi^{-}_\beta,\hat{g},\mu)$, where $\xi^{-}_\beta\in S_{M_{-}}$ is the 
restriction of $\L$ on $M_{-}$. Since $M_{-}=D^2\times T^2/\Z_\alpha$, we can choose a metric 
$\hat{g}$ which has non-negative, somewhere positive scalar curvature. Taking $\mu=0$, it follows
that $\M_{\hat{M}_{-}}(\xi^{-}_\beta,\hat{g},\mu)$ consists entirely of reducible solutions, i.e., 
$(A,0)$ where $A$ is flat. For the purpose of gluing we shall in fact consider the based version
$\M^0_{\hat{M}_{-}}(\xi^{-}_\beta,\hat{g},\mu)$, which is the space of finite energy solutions modulo
homotopically trivial gauge transformations.  Thus $\M^0_{\hat{M}_{-}}(\xi^{-}_\beta,\hat{g},\mu)$
is given by the set of $[A]$'s, where $[A]$ is the equivalence class of flat $U(1)$-connections on 
$\det \xi^{-}_\beta$ modulo homotopically trivial gauge transformations. Now for each 
$h_{-}\in \Lambda$,  the isomorphism $\det h_{-}:\det \xi^{-}_\beta|_N \rightarrow \det \xi_0$ 
embeds $M^0_{\hat{M}_{-}}(\xi^{-}_\beta,\hat{g},\mu)$ into $H^1(N;\R)$ as follows: $[A]\in M^0_{\hat{M}_{-}}(\xi^{-}_\beta,\hat{g},\mu)$ is sent to the image of $[A]|_N$ in $H^1(N;\R)$ via the map induced by $\det h_{-}$. For each $h_{-}\in\Lambda$, we denote by 
$\Omega_\beta(h_{-})$ the corresponding image in $H^1(N;\R)/ j_{+}2H^1(|M_{+}|;\Z)$. 

The $L^2$-moduli space on $\hat{M}_{0,-}$ has a similar structure. More precisely, we consider 
$\M_{\hat{M}_{0,-}}(\xi^{0,-},\hat{g},\mu)$, the $L^2$-moduli space on $\hat{M}_{0,-}$.  Again, 
since $M_{0,-}=D^2\times T^2$, we can choose a metric $\hat{g}$ which has non-negative, somewhere positive scalar curvature. Taking $\mu=0$, it follows that 
$\M_{\hat{M}_{0,-}}(\xi^{0,-},\hat{g},\mu)$ consists entirely of reducible solutions. 
Let $\M^0_{\hat{M}_{0,-}}(\xi^{0,-},\hat{g},\mu)$ be the based version. Then for each $h_{0,-}\in \Lambda_0$, the isomorphism $\det h_{0,-}$ embeds $\M^0_{\hat{M}_{0,-}}(\xi^{0,-},\hat{g},\mu)$ 
into $H^1(N;\R)$. Denote by $\Omega_{0,-}(h_{0,-})$ the corresponding image in 
$H^1(N;\R)/ j_{+} 2H^1(|M_{+}|;\Z)$, $\forall h_{0,-}\in\Lambda_0$.

\begin{lem}
Regard $e_1$ as a coordinate function on 
$H^1(N;\Z)/ j_{+} 2H^1(|M_{+}|;\Z)$, which takes values only in $\R/2m\Z$ when $l$
is torsion of order $m$. Then for any $h_{-}\in\Lambda$, $h_{0,-}\in\Lambda_0$, the subsets 
$\Omega_\beta (h_{-})$ and $\Omega_{0,-}(h_{0,-})$ are $2$-dimensional subspaces defined
by $e_1=-(2\beta+1)/\alpha$ and $e_1=-1$ respectively. Consequently, $\Omega_\beta (h_{-})
\equiv\Omega_\beta$, $\Omega_{0,-}(h_{0,-})\equiv \Omega_{0,-}$ are independent of 
$h_{-}$, $h_{0,-}$ respectively, and both $\Omega_\beta$ and $\Omega_{0,-}$ miss the lattice of
even integer coordinates in $H^1(N;\R)/ j_{+}2H^1(|M_{+}|;\Z)$. Moreover, $\Omega_\beta$ is 
isotopic to $\Omega_{0,-}$ in the complement of the lattice of even integer coordinates in 
$H^1(N;\R)/ j_{+}2H^1(|M_{+}|;\Z)$.
\end{lem}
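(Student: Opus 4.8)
The plan is to reduce the statement to an explicit description of the based reducible moduli spaces as spaces of flat connections, and then to locate a single holonomy coordinate using the relative Chern class computation of Lemma 2.3.

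First I would identify $\M^0_{\hat{M}_{-}}(\xi^{-}_\beta,\hat{g},0)$ and $\M^0_{\hat{M}_{0,-}}(\xi^{0,-},\hat{g},0)$ explicitly. By the choice of a metric with non-negative, somewhere positive scalar curvature, each consists of reducibles $(A,0)$ with $A$ flat, and modulo homotopically trivial gauge transformations the flat $U(1)$-connections on a fixed (orbifold) line bundle are canonically parametrized by $H^1(|M_{-}|;\R)$, resp. $H^1(|M_{0,-}|;\R)$. Since $|M_{-}|$ and $|M_{0,-}|$ both deformation retract onto the $2$-torus spanned by the product circle and the Seifert (core) circle, each based moduli space is a copy of $\R^2$. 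Composing $\partial_\infty$ with the trivialization $\det h_{-}$ (resp. $\det h_{0,-}$) then embeds each as an affine $2$-plane in $H^1(N;\R)\cong\R^3$, which already yields the asserted dimension; it remains to locate the planes.

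Next I would pin the $e_1$-coordinate. Here $e_1$ is Poincar\'e dual to the $T^2$-fibre $\{pt\}\times T^2$ of $N=\partial D^2\times T^2$, so it is exactly the holonomy coordinate around the meridian $\mu=\partial D^2$ of $l$; the remaining two coordinates are the holonomies around the generators of $T^2$. Because $\mu$ bounds the orbifold disc $D^2/\Z_\alpha$ in $M_{-}$ (and the honest disc $D^2$ in $M_{0,-}$), the image of the restriction $H^1(|M_{-}|;\R)\to H^1(N;\R)$ is the plane $\{e_1=0\}$, which accounts for the two free directions. The trivialization $\det h_{-}$ shifts this plane by an affine vector; since $F_A=0$, Chern--Weil applied to the disc $D^2\times\{pt\}$ shows the shift is purely in the $e_1$-direction and equals $-\int_{D^2\times\{pt\}}c(\xi^{-}_\beta,h_{-})$. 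By Lemma 2.3 and the normalization of $h_{-}$ this is $-(2\beta+1)/\alpha$, while the first bullet of Lemma 2.3 together with the normalization $c([(\xi^{0,-},h_{0,-})])=1$ gives $-1$ in the $Y_{0,-}$ case. Hence $\Omega_\beta(h_{-})=\{e_1=-(2\beta+1)/\alpha\}$ and $\Omega_{0,-}(h_{0,-})=\{e_1=-1\}$.

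Finally, the independence and topological conclusions are elementary. Changing the representative in $\Lambda$ (resp. $\Lambda_0$) alters $h_{-}$ by an element of $j_{+}H^1(|M_{+}|;\Z)$, which after passing to determinants moves the $e_1$-value only within $j_{+}2H^1(|M_{+}|;\Z)$; hence in $H^1(N;\R)/j_{+}2H^1(|M_{+}|;\Z)$ the plane is independent of the choice, giving $\Omega_\beta(h_{-})\equiv\Omega_\beta$ and $\Omega_{0,-}(h_{0,-})\equiv\Omega_{0,-}$. Since $0<2\beta+1<2\alpha$ forces $-(2\beta+1)/\alpha\in(-2,0)$, and likewise $-1\in(-2,0)$, both values lie in the open slab $-2<e_1<0$, which contains no even integer; therefore neither plane meets the lattice of even-integer coordinates, a conclusion unaffected by the reduction mod $2m$. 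Because both planes lie in this connected slab, the linear isotopy sliding $e_1$ monotonically from $-(2\beta+1)/\alpha$ to $-1$ stays in the complement of the even-integer lattice, which is the desired isotopy. The one genuinely delicate step is the $e_1$-pinning: matching the asymptotic meridian holonomy with the relative Chern number of Lemma 2.3, including the orbifold cone-point contribution, the odd $Spin^c$-versus-$Spin$ shift, and the sign and mod-$2\Z$ (resp. mod-$2m\Z$) bookkeeping; the dimension count, lattice-avoidance, and isotopy are then routine linear algebra on $H^1(T^3;\R)$.
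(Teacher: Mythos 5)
Your proposal is correct and follows essentially the same route as the paper: identify the based $L^2$-moduli spaces with affine copies of $H^1(|M_{-}|;\R)\cong\R^2$ sitting parallel to $\{e_1=0\}$, pin the $e_1$-offset by equating the asymptotic meridian holonomy with $-c([(\xi^{-}_\beta,h_{-})])$ via Lemma 2.3, and finish with the interval argument for lattice-avoidance and the linear isotopy. The only cosmetic difference is that the paper handles the orbifold piece by lifting to the $\alpha$-fold cover $D^2\times T^2$ and using $\hat{e}_1=\alpha e_1$, whereas you run Chern--Weil directly on the orbifold disc; both yield the same identity $e_1([A])=-c([(\xi^{-}_\beta,h_{-})])$.
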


\begin{proof}
First, consider the embedding of $\M^0_{\hat{M}_{0,-}}(\xi^{0,-},\hat{g},\mu)$ into $H^1(N;\R)$
via $\det h_{0,-}$ for any given $h_{0,-}\in \Lambda_0$. Let $[A]\in
\M^0_{\hat{M}_{0,-}}(\xi^{0,-},\hat{g},\mu)$ be any element. Note that the holonomy of $[A]|_N$
is zero around the loop $\partial D^2$. By modifying $A$ with an imaginary valued harmonic
$1$-form which vanishes on $\partial D^2$, we can assume that $[A]|_N$ has integral holonomy,
thus defining a trivialization $h$ of $\det \xi^{0,-}|_N$. Note that this modification does not change
the $e_1$-coordinate of the image of $[A]$ in $H^1(N;\R)$ since the harmonic $1$-form used
vanishes on $\partial D^2$. With respect to the trivialization $h$, the relative first Chern class of 
$\det \xi^{0,-}$ is zero. It follows then that
$$
c([(\xi^{0,-},h_{0,-})])=-h\circ h_{0,-}^{-1} ([\partial D^2])=-e_1([A]),
$$
where $e_1([A])$ stands for the $e_1$-coordinate of the image of $[A]$ in $H^1(N;\R)$.
This shows that the $e_1$-coordinate on $\Omega_{0,-}(h_{0,-})$, taking values in 
$\R/2m\Z$ when $l$ is torsion of order $m$, is constant equaling $-1$. Consequently, 
$\Omega_{0,-}(h_{0,-})\equiv \Omega_{0,-}$ is independent of $h_{0,-}\in\Lambda_0$.
It is the $2$-dimensional subspace defined by $e_1=-1$ because 
$\M^0_{\hat{M}_{0,-}}(\xi^{0,-},\hat{g},\mu)$ is a $2$-dimensional space. Clearly,
$\Omega_{0,-}$ misses the lattice of even integer coordinates in 
$H^1(N;\R)/ j_{+}2H^1(|M_{+}|;\Z)$ since $e_1=-1$ on $\Omega_{0,-}$.

The case of $\Omega_\beta(h_{-})$ is similar. For any $[A]\in 
\M_{\hat{M}_{-}}(\xi^{-}_\beta,\hat{g},\mu)$, we regard it as a flat $U(1)$-connection on 
$D^2\times T^2$, where $M_{-}=D^2\times T^2/\Z_\alpha$. Let $\hat{N}=\partial
(D^2\times T^2)$ which covers $N$, and let $\hat{e}_1$ be the corresponding coordinate
on $H^1(\hat{N};\R)$. Then $\hat{e}_1=\alpha \cdot e_1$. With this understood, one obtains
as argued in the previous paragraph that
$$
\alpha \cdot c([(\xi^{-}_\beta,h_{-})])=-\hat{e}_1([A]).
$$
Hence $e_1([A])=-c([(\xi^{-}_\beta,h_{-})])=-(2\beta+1)/\alpha$. Analogously, 
$\Omega_\beta(h_{-})\equiv \Omega_\beta$ is independent of $h_{-}\in \Lambda$, and is
the $2$-dimensional subspace defined by $e_1=-(2\beta+1)/\alpha$. Since for
$0\leq\beta<\alpha$, $e_1=-(2\beta+1)/\alpha$ takes no even integer values, $\Omega_\beta$
also misses the lattice of even integer coordinates in $H^1(N;\R)/ j_{+}2H^1(|M_{+}|;\Z)$.

Finally, each $\Omega_\beta$ is isotopic to $\Omega_{0,-}$ in the complement of the lattice
of even integer coordinates because $e_1=-(2\beta+1)/\alpha$ lies in the interval
$[-2+\frac{1}{\alpha},-\frac{1}{\alpha}]$, which contains no even integers and contains $-1$,
the value of the $e_1$-coordinate of $\Omega_{0,-}$. 

\end{proof}

With the preceding understood, the boundary map 
$$
\partial_\infty: \M_{\hat{M}_{+}}(\xi^{+}, \hat{g}, \mu)\rightarrow H^1(N;\R)/ j_{+}2H^1(|M_{+}|;\Z)
$$ 
can be made transverse to each subspace $\Omega_\beta$, $\Omega_{0,-}$ by taking a generic choice of $\mu$. Then the gluing theorem of Morgan, Mrowka and Szab\'{o} asserts that 
$$
SW_M(\L)=\# (\M_{\hat{M}_{+}}(\xi^{+}, \hat{g}, \mu)\cap 
\partial_\infty^{-1}(\Omega_\beta))
$$
and 
$$
SW_{M_0}(\phi(\L))=\# (\M_{\hat{M}_{+}}(\xi^{+}, \hat{g}, \mu)\cap 
\partial_\infty^{-1}(\Omega_{0,-})),
$$
where the right-hand sides stand for the algebraic intersections of 
$\M_{\hat{M}_{+}}(\xi^{+}, \hat{g}, \mu)$ with $\partial_\infty^{-1}(\Omega_\beta)$ and 
$\partial_\infty^{-1}(\Omega_{0,-})$ respectively.
The assertion $SW_M(\L)=SW_{M_0}(\phi(\L))$ follows from the fact that $\Omega_\beta$
is isotopic to $\Omega_{0,-}$ in the complement of the lattice of even integer coordinates in 
$H^1(N;\R)/ j_{+}2H^1(|M_{+}|;\Z)$.

\section{Proof of Theorem 1.1: case (ii)}

The $3$-torus $N$ being essential in $M$ (resp. $M_0$) means that there is a class 
$\varpi\in H^2(|M|;\R)$ whose restriction to $H^2(N;\R)$ is non-trivial, which is easily seen
to be equivalent to $l$ being non-torsion in $H_1(|Y|;\Z)$. The existence of such a $\varpi$ 
is what is required in the setup of Taubes' gluing theorem in \cite{T2}. This said, we shall fix a covariantly constant $2$-form $\omega_0$ on $N$ (with respect to a fixed flat metric) which represents the restriction of $\varpi$ in $H^2(N;\R)$.

With the preceding understood, let $\L\in S_M$ be given. We shall endow $M$ with a Riemannian 
metric $g$ such that in a regular neighborhood of $N$, $g=ds^2+g_N$, where $s$ is the normal
coordinate in the regular neighborhood and $g_N$ is a fixed flat metric on $N$.
The Seiberg-Witten equations considered in this context take the following form
$$
\left\{ \begin{array}{c}
{D_A}\psi=0 \\
P_{+}F_A=\tau(\psi\otimes\psi^\ast)-i r\omega
\end{array}
\right .
$$
where $\omega$ is a self-dual $2$-form and $r>0$ is a fixed, sufficiently large constant. 
Moreover, in a regular neighborhood of $N$, $\omega=ds\wedge \theta+\omega_0$ 
where $\theta$ is the metric dual to $\omega_0$. 

The idea of gluing is to analyze the effect on the moduli space of Seiberg-Witten equations when 
the cylindrical neck neighborhood of $N$ gets metrically longer and longer. This requires knowledge 
about the corresponding $L^2$-moduli spaces over the cylindrical-end orbifolds $\hat{M}_{+}$,
$\hat{M}_{-}$ respectively. 

Consider $\hat{M}_{+}$ first. Given any $\xi^{+}\in S_{M_{+}}$, we let $\M(\xi^{+})$ denote the space of solutions $(A,\psi)$ of the above form of the Seiberg-Witten equations modulo gauge transformations, 
where $(A,\psi)$ satisfies the following finite energy condition
$$
\int_{\hat{M}_{+}} |F_A|^2 dvol<\infty.
$$
(Here on the cylindrical end of $\hat{M}_{+}$, one assumes that $\omega=ds\wedge \theta+\omega_0$.)

According to \cite{T2}, Lemma 3.5, for each gauge equivalence class $[(A,\psi)]\in\M(\xi^{+})$, 
the restriction of $(A,\psi)$ to the slice $\{s\}\times N$ on the cylindrical end converges, up to a gauge
transformation, exponentially fast to a $(A_0,\psi_0)$ as $s\rightarrow\infty$, where $A_0$ is a trivial connection on $\det (\xi^{+}|_N)$ and $\psi_0$ is a non-zero, covariantly constant section of the spinor bundle on $N$. This said, each $(A,\psi)$ determines a trivialization of $\xi^{+}|_N$, which is given by a homotopy class of isomorphisms from $\xi^{+}|_N$ to $\xi_0$. Consequently, each gauge 
equivalence class  $[(A,\psi)]$ determines an element $[(\xi^{+}, h_{+})]\in\underline{S}_{M_{+}}$,
and accordingly, there is a decomposition 
$$
\M(\xi^{+})=\bigcup_{[(\xi^{+}, h_{+})]\in \underline{S}_{M_{+}}} \M([(\xi^{+},h_{+})]).
$$
Furthermore, for any constant $C$, there are only finitely many $[(\xi^{+},h_{+})]$ 
such that $\M([(\xi^{+},h_{+})])\neq \emptyset$ and 
$$
\int_{|M_{+}|} c(\xi^{+},h_{+})\cup \varpi \leq C.
$$
Finally, each $\M([(\xi^{+},h_{+})])$ can be used to define a Seiberg-Witten invariant, 
which we will denote by $SW([(\xi^{+},h_{+})])\in\Z$.

Similar discussions apply to $\hat{M}_{-}$, $\hat{M}_{0,-}$ as well.  More precisely, given any 
$\xi_\beta^{-}\in S_{M_{-}}$, we have the corresponding $L^2$-moduli space $\M(\xi_\beta^{-})$ 
and a decomposition
$$
\M(\xi^{-}_\beta)=\bigcup _{[(\xi^{-}_\beta,h_{-})]\in \underline{S}_{M_{-}}} 
\M([(\xi_\beta^{-},h_{-})]),
$$
and for $\xi^{0,-}\in S_{M_{0,-}}$, we have 
$$
\M(\xi^{0,-})=\bigcup _{[(\xi^{0,-},h_{0,-})]\in \underline{S}_{M_{0,-}}} \M([(\xi^{0,-},h_{0,-})]).
$$
Finally, Seiberg-Witten invariants $SW([(\xi^{-}_\beta,h_{-})])\in\Z$, 
$SW([(\xi^{0,-},h_{0,-})])\in\Z$ are defined using the moduli spaces 
$\M([(\xi_\beta^{-},h_{-})])$ and $\M([(\xi^{0,-},h_{0,-})])$ respectively.

With the preceding understood, Taubes' gluing theorem (cf. \cite{T2}, Theorem 2.7) asserts that
$$
SW_M(\L)=\sum_{([(\xi^{+},h_{+})], [(\xi^{-}_\beta,h_{-})])\in \psi^{-1}(\L)}
SW([(\xi^{+},h^{+})])\cdot  SW([(\xi^{-}_\beta,h^{-})])
$$
and
$$
SW_M(\phi(\L))=\sum_{([(\xi^{+},h_{+})], [(\xi^{0,-},h_{0,-})])\in \psi_0^{-1}(\phi(\L))}
SW([(\xi^{+},h^{+})])\cdot  SW([(\xi^{0,-},h^{0,-})]),
$$
where $\psi$, $\psi_0$ are the maps in Lemma 2.2.  

By the definition of the map $\phi$, it is clear that $([(\xi^{+},h_{+})], [(\xi^{0,-},h_{0,-})])\in \psi_0^{-1}(\phi(\L))$ if and only if $[(\xi^{0,-},h_{0,-})]=\phi^\prime ([(\xi^{-}_\beta,h_{-})])$ and
$([(\xi^{+},h_{+})], [(\xi^{-}_\beta,h_{-})])\in \psi^{-1}(\L)$. Thus the assertion
$SW_M(\L)=SW_{M_0}(\phi(\L))$ follows immediately from the following lemma.

\begin{lem}
For any $[(\xi^{-}_\beta,h_{-})]\in\underline{S}_{M_{-}}$,
$
SW([(\xi^{-}_\beta,h_{-})])=SW(\phi^\prime([(\xi^{-}_\beta,h_{-})])).
$
\end{lem}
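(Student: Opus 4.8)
The plan is to compute each of the two relative invariants separately by exploiting the product structure of the pieces, and then to match the resulting counts through the correspondence of asymptotic data built into $\phi^\prime$. The key observation is that $\hat{M}_{-}$ and $\hat{M}_{0,-}$ are both products with $T^2$: writing $\Sigma^\prime=D^2/\Z_\alpha$ for the orbifold disc with a single cone point of order $\alpha$, we have $\hat{M}_{-}=\hat{\Sigma}^\prime\times T^2$ and $\hat{M}_{0,-}=\hat{D}^2\times T^2$, where $\hat{\Sigma}^\prime$ and $\hat{D}^2$ are the respective discs with the half-cylinder $\s^1\times[0,\infty)$ attached along $\partial D^2$, and the cylindrical end $N\times[0,\infty)=(\partial D^2\times[0,\infty))\times T^2$ respects this splitting. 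The flat metric near $N$, the perturbation $\omega=ds\wedge\theta+\omega_0$, and the $Spin^c$-structures $\xi^{-}_\beta$, $\xi^{0,-}$ (whose spinor bundles $(\I\oplus K^{-1}_{D^2})\otimes\underline{\C}_\beta$ and $\I\oplus K^{-1}_{D^2}$ are pulled back from the disc factor) may all be taken $T^2$-invariant and compatible with the splitting. First I would carry out the standard dimensional reduction: on a $T^2$-invariant configuration the Seiberg-Witten equations of Section 4 descend to the abelian vortex equations on the $2$-(orbifold) disc, so that $SW([(\xi^{-}_\beta,h_{-})])$ and $SW(\phi^\prime([(\xi^{-}_\beta,h_{-})]))$ become the signed counts of the corresponding finite-energy vortex moduli spaces on $\hat{\Sigma}^\prime$ and $\hat{D}^2$, with the asymptotic boundary condition prescribed by $h_{-}$ and $h_{0,-}$ respectively.

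Next I would compute both vortex counts. A finite-energy vortex on a cylindrical-end disc is governed by its zero set, and its relative invariant depends on the asymptotic data only through the associated vortex number, i.e. through the relative evaluation of $c_1$ fixed by the trivialization; this is precisely the quantity $c$ of Lemma 2.3. On the smooth side the vortex number equals (up to the fixed sign convention) $2k+1$, coming from $c([(\xi^{0,-},h_{0,-})])=2k+1$; on the orbifold side one passes to the $\alpha$-fold branched cover $p\colon D^2\to|\Sigma^\prime|$, $z\mapsto z^\alpha$, where an orbifold vortex on $\hat{\Sigma}^\prime$ becomes a $\Z_\alpha$-equivariant vortex whose number, via the relation $\hat{e}_1=\alpha\,e_1$ recorded in Lemma 3.1, equals $2k\alpha+2\beta+1$, coming from $c([(\xi^{-}_\beta,h_{-})])=2k+\frac{2\beta+1}{\alpha}$. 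The crucial point is that the signed count of such a disc vortex moduli space equals $1$ exactly when the vortex number is non-negative and $0$ otherwise. Since $0\le\beta\le\alpha-1$, one checks directly that $2k\alpha+2\beta+1\ge 0$ if and only if $k\ge 0$, which is the same threshold as $2k+1\ge 0$. Thus the two counts coincide for every $k$ and $\beta$.

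Finally, because $\phi^\prime$ is defined in Definition 2.4 precisely so as to send $c=2k+\frac{2\beta+1}{\alpha}$ to $c=2k+1$, it preserves the integer $k$, and therefore the preceding threshold computation gives the desired equality of the unsigned counts; it remains to verify that the orientations used to define the two invariants correspond under the reduction, which follows from the homology-orientation conventions fixed once and for all for $Y$ and $Y_0$ in Remark 1.2. I expect the main obstacle to be the orbifold dimensional reduction and the precise vortex count on $\hat{\Sigma}^\prime$: one must justify that the $T^2$-reduction is valid on the cylindrical-end orbifold and, above all, show that the cone point of $\Sigma^\prime$ (which a priori contributes a fractional local degree $\frac{2\beta+1}{\alpha}$) does not alter the signed count once the asymptotic data have been matched. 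As noted in Section 2, the singular locus $\{0\}\times T^2$ lies in the interior of $\hat{M}_{-}$, away from the gluing region $N$, so the Fredholm and compactness theory of \cite{T2} applies verbatim; the only genuinely new ingredient is the local analysis at the cone point, which the branched cover $z\mapsto z^\alpha$ reduces to the standard smooth vortex analysis, absorbing exactly the fractional part $\frac{2\beta+1}{\alpha}$.
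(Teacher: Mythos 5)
Your strategy is genuinely different from the paper's. The paper does \emph{not} compute $SW([(\xi^{-}_\beta,h_{-})])$ directly on the cylindrical-end orbifold $\hat{M}_{-}$. Instead it runs the gluing theorem backwards: since Taubes' formula expresses $SW_M(\L)$ as a sum of products of relative invariants, and since the target identity $SW_M(\L)=SW_{M_0}(\phi(\L))$ has already been verified for one closed model (namely $M=T^2\times\Sigma$ with $\Sigma$ a torus, respectively a sphere, with one cone point of order $\alpha$ --- the first via the Morgan--Mrowka--Szab\'{o} argument of Section 3, the second via the Li--Liu wall-crossing formula together with the identification of the $\bar\partial$-kernels with those on the desingularization), the values $SW([(\xi^{-}_\beta,h_{-})])$ are forced to agree with the known smooth answer $SW([(\xi^{0,-},h_{0,-})])=1$ iff $c>0$. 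This bootstrap entirely sidesteps the analysis you are attempting.

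The gap in your proposal is the step you yourself flag as the ``only genuinely new ingredient'': the signed count of the finite-energy vortex moduli space on the orbifold disc $\hat{\Sigma}^\prime$. Passing to the branched cover $z\mapsto z^\alpha$ does not reduce this to ``the standard smooth vortex analysis,'' because the solutions you must count are the $\Z_\alpha$-\emph{invariant} vortices on the cover, i.e.\ the fixed-point set of the $\Z_\alpha$-action on the covering moduli space. The signed count of a fixed-point set is not determined by the count of the ambient moduli space, and the cone point can in principle support vortex divisors with fractional local multiplicity $\tfrac{2\beta+1}{\alpha}$ whose contribution must be analyzed separately (transversality and orientation at such equivariant configurations included). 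Your threshold computation ($2k\alpha+2\beta+1\ge 0$ iff $k\ge 0$ iff $2k+1>0$) is correct and does match the two sides \emph{assuming} the count is $1$ whenever the vortex number is positive and $0$ otherwise; but that assumption is exactly the content of the lemma on the orbifold side, so as written the argument is circular at its crucial point. To salvage a direct proof you would need to establish the orbifold vortex count independently (for instance by identifying the moduli space with a symmetric-product-type space of effective orbifold divisors of the given fractional degree and computing its Euler class), or else fall back on the paper's indirect argument, which derives the count for free from a single closed example.
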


\begin{proof}
According to Taubes \cite{T2}, the Seiberg-Witten invariant of $M_{0,-}=D^2\times T^2$ is given
by the polynomial 
$$
t(1-t^2)^{-1}=t+t^3+t^5+\cdots.
$$
This can be equivalently stated as 
$$
SW([(\xi^{0,-},h_{0,-})])=\left\{\begin{array}{cc}
1 & \mbox{ if } c([(\xi^{0,-},h_{0,-})])>0\\
0 & \mbox{ otherwise. }\\
\end{array}
\right .
$$
Thus the lemma follows by showing that for any $[(\xi^{-}_\beta,h_{-})]\in\underline{S}_{M_{-}}$,
$$
SW([(\xi^{-}_\beta,h_{-})])=\left\{\begin{array}{cc}
1 & \mbox{ if } c([(\xi^{-}_\beta,h_{-})])>0\\
0 & \mbox{ otherwise, }\\
\end{array}
\right .
$$
because by the definition of the map $\phi^\prime$, 
$$
c([(\xi^{-}_\beta,h_{-})])>0 \mbox{ if and only if } c(\phi^\prime([(\xi^{-}_\beta,h_{-})]))>0.
$$

The strategy of the proof is based on the following observation: by Taubes' gluing theorem in \cite{T2},
Lemma 4.1 must be true as long as the claimed relation $SW_M(\L)=SW_{M_0}(\phi(\L))$ holds 
for any one particular example of $4$-orbifold $M$ which has nonzero Seiberg-Witten invariant.
For example, consider $M=T^2\times \Sigma$ where $\Sigma$ is the $2$-torus with one
singular point of multiplicity $\alpha$. The corresponding $M_0$ is simply the $4$-torus
which has nonzero Seiberg-Witten invariant.
For this $4$-orbifold, the relation $SW_M(\L)=SW_{M_0}(\phi(\L))$ has been established in
the previous section using the gluing theorem of Morgan, Mrowka and Szab\'{o}, which would then
imply Lemma 4.1. 

For a somewhat independent proof of the lemma, we can take $M=T^2\times \Sigma$ where 
$\Sigma$ is the $2$-sphere with one singular point of multiplicity $\alpha$. In this case,
$SW_M(\L)=SW_{M_0}(\phi(\L))$ can be established using the wall-crossing formula in Li-Liu \cite{LL}. 

More concretely, since $M$ has positive scalar curvature, the Seiberg-Witten invariant of $M$
is given by the wall-crossing number, which equals (cf. Lemma 2.6 in \cite{LL})
$$
\int_{T^{b_1}} c_{\frac{b_1}{2}}(V^{+}-V_{-}).
$$
Here $T^{b_1}=H^1(|M|;\R)/H^1(|M|;\Z)$ is the torus parametrizing the gauge equivalence classes
of reducible solutions, and $V_{+}-V_{-}$ is the index bundle of the family of Dirac operators 
parametrized by $T^{b_1}$. The key observation is that in this case $M$ is K\"{a}hler, so that the 
Dirac operators are given by the $d$-bar operators
(cf. \cite{M}). The kernel and co-kernel of the $d$-bar operators, which are given by holomorphic sections of orbifold bundles over an orbifold with 
co-dimension $2$ singularities, can be identified with the kernel and co-kernel of the 
corresponding $d$-bar operators for the de-singularization of the orbifold bundles over the
de-singularization of the orbifold. 

With this understood, let $\L$ be a $Spin^c$-structure on $M$ whose spinor bundle is given
by $(\I\oplus K^{-1}_{\Sigma})\otimes E$, where $E$ is an orbifold complex line bundle over 
$\Sigma$
with Seifert invariant $(b, (\alpha,\beta))$, $0\leq \beta<\alpha$. In this case, the de-singularization of $E$ is the complex line bundle $E_0$ over $\s^2$ (the de-singularization of $\Sigma$) which has
degree $b$. Thus the Seiberg-Witten invariant $SW_M(\L)$, given by the wall-crossing number,
is equal to the Seiberg-Witten invariant of $M_0$ associated to the $Spin^c$-structure whose spinor bundle is $(\I\oplus K^{-1}_{\s^2})\otimes E_0$ (which is given by the same wall-crossing
number). Note that this $Spin^c$-structure on $M_0$ is exactly $\phi(\L)$. Finally, the Seiberg-Witten invariant of $M_0=T^2\times \s^2$ is nonzero, from which Lemma 4.1 follows. 

\end{proof}

\section{Examples: Seifert $3$-manifolds}

Let $Y=\s^1\times \Sigma$ where $\Sigma$ is a Riemann surface of genus $g$. Then the
Seiberg-Witten invariant of $Y$ is given by the following polynomial 
$$
SW_Y(t)=(t^{-1}-t)^{2g-2}, \mbox{ where } t=PD(\s^1\times \{pt\}).
$$
(In $SW_Y(t)$, the coefficient of $t^k$ is the Seiberg-Witten invariant $SW_Y(\xi)$ where the 
$Spin^c$-structure $\xi$ satisfies $c_1(\det \xi)=k t$.) This formula can be obtained independently
in two different ways. The first method is to identify $SW_Y$ with the Seiberg-Witten invariant 
of the K\"{a}hler surface $T^2\times\Sigma$ and then appeal to the formula in Brussee \cite{B} or
Friedman-Morgan \cite{FM1}. The second method is to use Taubes' gluing theorem in \cite{T2}
to run an induction on the genus $g$. The latter approach requires initial values for the cases
where $g=0$ or $1$, i.e., the Seiberg-Witten invariant of $T^2\times \s^2$ or $T^4$. The Seiberg-Witten invariant of $T^2\times \s^2$ can be computed using the wall-crossing formula
in \cite{LL}, and the Seiberg-Witten invariant of $T^4$ follows from Taubes' work \cite{T1}.

For our purpose here, we shall reinterpret the formula for $SW_Y(t)$ as follows. Note that $Y$
has a canonical $Spin^c$-structure $\xi_0$, whose associated spinor bundle $S_0$ is given by
$\I\oplus K_\Sigma^{-1}$. Any relevant $Spin^c$-structure $\xi$ (i.e., with nonzero Seiberg-Witten invariant) may be identified with a complex line bundle $D$ over $\Sigma$ such that the associated
spinor bundle of $\xi$ is given by $D\oplus  K_\Sigma^{-1} \otimes D$. With this understood,
note that $c_1(\det\xi)=(2d-2g+2)t$ where $d$ is the degree of $D$, and we can reinterpret the
formula for $SW_Y(t)$ as 
$$
SW_Y(\xi)\equiv SW_Y(D)=\left \{\begin{array}{ll}
(-1)^d \left (\begin{array}{c}
2g-2\\
d
\end{array}
\right ) & \mbox{ if } g\geq 1, d\in [0,2g-2],\\
d+1 & \mbox{if } g=0, \; d\geq 0,\\
0 & \mbox{ otherwise. }\\
\end{array}
\right .
$$

As a straightforward application of Theorem 1.1, we shall compute the Seiberg-Witten invariant
of the $3$-orbifold $Y=\s^1\times \Sigma$, where $\Sigma$ is the $2$-orbifold whose underlying
surface $|\Sigma|$ has genus $g$ and whose singular set consists of $m$ points $z_1,\cdots,
z_m$ with the associated multiplicities $\alpha_1,\cdots,\alpha_m$. To this end, note that $Y$
has a canonical $Spin^c$-structure $\xi_0$ whose associated spinor bundle is $S_0=\I\oplus
K_{\Sigma}^{-1}$. Any $Spin^c$-structure $\xi$ with nonzero Seiberg-Witten invariant can be
identified with an orbifold complex line bundle $D$ over $\Sigma$ in the sense that the associated spinor bundle of $\xi$ is $S=D\oplus  K_\Sigma^{-1} \otimes D$. Now observe that 
if a $Spin^c$-structure $\xi$ on $Y$ is given by $D$ over $\Sigma$, then the $Spin^c$-structure 
$\phi(\xi)$ on $|Y|=\s^1\times |\Sigma|$ is given by $|D|$ over $|\Sigma|$, where, if $D$ is given 
by the Seifert invariant $(d, (\alpha_1,\beta_1), \cdots, (\alpha_m,\beta_m))$, 
with $0\leq \beta_i<\alpha_i$, the complex line bundle $|D|$ over $|\Sigma|$ has degree $d$. 
With the preceding understood, Theorem 1.1 implies that
$$
SW_Y(\xi)\equiv SW_Y(D)=SW_{|Y|}(|D|)=\left \{\begin{array}{ll}
(-1)^d \left (\begin{array}{c}
2g-2\\
d
\end{array}
\right ) & \mbox{ if } g\geq 1, d\in [0,2g-2],\\
d+1 & \mbox{if } g=0, \; d\geq 0,\\
0 & \mbox{ otherwise. }
\end{array}
\right .
$$

Having been able to compute the Seiberg-Witten invariant of the $3$-orbifolds 
$\s^1\times\Sigma$, we shall next determine the Seiberg-Witten invariant of Seifert $3$-manifolds
$Y$, where $\pi: Y\rightarrow \Sigma$ is the unit sphere bundle of an orbifold complex line bundle
$E$ over $\Sigma$. We assume the Seifert invariant of $E$ is 
$$
(e, (\alpha_1,e_1),\cdots, (\alpha_m,e_m)), \mbox{ where } 0<e_i<\alpha_i, \text{ gcd }(e_i,\alpha_i)=1.
$$
We remark that if $b_1(Y)\geq 1$, then the Euler class of $E$ must be torsion when the genus
of $|\Sigma|$ is zero, i.e., $g=0$.

The Seiberg-Witten invariant of $Y$ can be identified with the Seiberg-Witten invariant of the
$4$-manifold $\s^1\times Y$, which Seifert fibers over the $3$-orbifold $\s^1\times \Sigma$. 
Work of Baldridge \cite{Bald2} then allows us to compute the Seiberg-Witten invariant of
$\s^1\times Y$ in terms of that of $\s^1\times\Sigma$. More precisely, according to \cite{Bald2},
the relevant $Spin^c$-structures on $\s^1\times Y$ are pull-backs of $Spin^c$-structures on
$\s^1\times \Sigma$. With this said, we shall let $\xi_0$ denote the $Spin^c$-structure on
$\s^1\times Y$ which is the pull-back of the canonical $Spin^c$-structure on $\s^1\times\Sigma$
whose associated spinor bundle is given by $\I\oplus K^{-1}_\Sigma$. Then any relevant 
$Spin^c$-structure $\xi$ may be identified with a complex line bundle $\pi^\ast D$, where $D$ is an orbifold complex line bundle over $\Sigma$. Note that the set of complex line bundles $\pi^\ast D$ may be identified with the set of equivalence classes $[D]$, where $[D]=[D^\prime]$ if and only if
$D-D^\prime \equiv 0 \pmod{E}$. 

With the preceding understood, work of Baldridge (cf. \cite{Bald2}, Theorem C) implies that 
the Seiberg-Witten invariant of $Y$ is given by the following formula
$$
SW_Y(\xi)\equiv SW_Y([D])=\sum_{D^\prime\in [D]} SW_{\s^1\times \Sigma} (D^\prime).
$$
(Compare the work of Mrowka, Ozsv\'{a}th and Yu \cite{MOY}; the special case of the above
formula where $\Sigma$ is non-singular has been discussed in \cite{Bald1}.)
It is useful to observe that if a $Spin^c$-structure $\xi$ corresponds to $[D]$, then $c_1(\det \xi)=
\pi^\ast (c_1(2D-K_\Sigma))$. In terms of Poincar\'{e} duality, if $D$ is given by the Seifert invariant
$(d, (\alpha_1,\beta_1),\cdots, (\alpha_m,\beta_m))$ and if $F$, $F_i$ denote the regular fiber class and the class of the exceptional fiber at $z_i$ respectively, then 
$$
c_1(\det \xi)=(2d- 2g+2) F+\sum_i (2\beta_i +1-\alpha_i) F_i.
$$

We shall illustrate the above formula for Seifert $3$-manifolds with the following three examples, which occupy the rest of this section.

\begin{exm}
Let $Y$ be the Seifert $3$-manifold, where $\Sigma$ is the $2$-orbifold with $g=5$,
$\alpha_1=3$, $\alpha_2=5$, $\alpha_3=7$, and $E$ has Seifert invariant $(1,(3,2),(5,3),(7,5))$.
Note that $H^2(Y;\Z)$ has no $2$-torsions, so that each $Spin^c$-structure is uniquely 
determined by its determinant line bundle. With this understood, we shall compute the Seiberg-Witten invariant for the $Spin^c$-structure $\xi$ where
$$
c_1(\det \xi)=-4 F -2F_2-4F_3.
$$
It is easily seen that $\xi$ corresponds to the equivalence class $[D]$ where $D$ is given by the Seifert invariant $(2,(3,1),(5,1),(7,1))$. 

In order to determine the $D^\prime$'s which belong to $[D]$, we first examine the Seifert
invariants of $kE$, $k\in\Z$. Denote the Seifert invariant of $kE$ by $(f_k, (3,a_k), (5,b_k),
(7,c_k))$. It is easy to check that if $k<0$, then $f_k\leq -4$, and if $k>2$, then $f_k>5$, and
for $k=2$, the Seifert invariant of $kE$ is $(5,(3,1), (5,1), (7,3))$. From this analysis, it follows
that the only elements of $[D]$ which contribute nontrivially to the Seiberg-Witten invariant 
are $D_0=D=(2,(3,1),(5,1),(7,1))$, $D_1=(4,(3,0),(5,4),(7,6))$ and $D_2=(7,(3,2),(5,2),(7,4))$.
Consequently,
$$
SW_Y(\xi)=SW_Y([D])=\sum_{i=0}^2 SW_{\s^1\times \Sigma}(D_i)=\sum_{d=2,4,7} (-1)^d
\left (\begin{array}{c}
8\\
d
\end{array}
\right )
=90.
$$
\end{exm}

\begin{exm}
In this example we consider $Y=\s^1\times \s^2$, which is viewed as a Seifert $3$-manifold
over $\Sigma$, where $\Sigma$ is the $2$-orbifold whose underlying space is $\s^2$, with
two singular points of the same multiplicity $\alpha$. The corresponding orbifold complex line 
bundle $E$ has Seifert invariant $(-1, (\alpha,\beta), (\alpha,\alpha-\beta))$, where 
$0<\beta<\alpha$, $\text{gcd }(\alpha,\beta)=1$.

We shall compute the Seiberg-Witten invariant of $Y$ as a Seifert $3$-manifold over $\Sigma$,
where for simplicity we take $\alpha=3$ and $\beta=1$. For each $d$, there are $3$ distinct equivalence classes
$[D_{d,i}]$, $i=0,1,2$, where
$D_{d,0}=(d, (3, 0), (3,0))$, $D_{d,1}=(d, (3,0), (3,1))$ and $D_{d,2}=(d,(3,1), (3,1))$. The 
$Spin^c$-structures $\xi_{d,i}$ which correspond to $[D_{d,i}]$ satisfy 
$c_1(\det \xi_{d,i})=(6d+2+2i)t$, where $t=PD(\s^1\times \{pt\})$. 
In order to compute the Seiberg-Witten invariants, we determine
the elements of each $[D_{d,i}]$. It is straightforward to check that the elements of $[D_{d,0}]$
have Seifert invaraints
$$
(d, (3, 0), (3,0)), \;\; (d-1, (3, 1), (3,2)), \;\; (d-1, (3, 2), (3,1)),
$$ 
the elements of $[D_{d,1}]$ have Seifert invaraints
$$
(d, (3, 0), (3,1)), \;\; (d, (3, 1), (3,0)), \;\; (d-1, (3, 2), (3,2)),
$$
and  the elements of $[D_{d,2}]$ have Seifert invaraints
$$
(d, (3, 1), (3,1)), \;\; (d, (3, 2), (3,0)), \;\; (d, (3, 0), (3,2)).
$$
Consequently, for $d\geq 0$, 
$$
SW_Y(\xi_{d,i})=SW_Y([D_{d,i}])= \left \{\begin{array}{cc}
(d+1)+d+d=3d+1 & \mbox{ if } i=0,\\
(d+1)+(d+1)+d=3d+2 & \mbox{ if } i=1,\\
(d+1)+(d+1)+(d+1)= 3d+3 & \mbox{ if }  i=2,\\
\end{array}
\right .
$$
and for $d<0$, $SW_Y(\xi_{d,i})=0$ for $i=0,1,2$.

The Seiberg-Witten invariant of $Y=\s^1\times\s^2$ is given by the polynomial 
$$
SW_Y(t)= (t^{-1}-t)^{-2}=\sum_{m=1}^\infty m t^{2m}, \mbox{ where } t=PD(\s^1\times \{pt\}).
$$
It is clear that our computation gives the same result. 

\end{exm}

\begin{exm}
In this example $Y$ is the mapping torus of a periodic diffeomorphism $f: T^2\rightarrow T^2$
of order $6$, where $f$ is given by the matrix
$$
\left (\begin{array}{cc}
1 & 1\\
-1 & 0\\
\end{array}
\right ).
$$
Topologically, $Y$ is the $0$-surgery on the trefoil knot. This description allows us to compute
the Seiberg-Witten invariant in terms of the Alexander polynomial of the knot (cf. \cite{MT,FS}),
and it is given by the following polynomial
$$
SW_Y(t)=\frac{t^4-t^2+1}{(1-t^2)^2}=1+\sum_{m=1}^{\infty}m t^{2m},
$$
where $t$ is the Poincar\'{e} dual of the generator of $H_1(Y)$.

On the other hand, since $f$ is periodic, $Y$ can be also viewed as a Seifert $3$-manifold
over $\Sigma$, where $\Sigma$ is the $2$-orbifold whose underlying space is $\s^2$ and which
has $3$ singular points $z_1, z_2, z_3$ with associated multiplicities $2,3$ and $6$. The 
corresponding orbifold complex line bundle $E$ has Seifert invariant $(-2, (2,1), (3,2), (6,5))$.
We shall compute the Seiberg-Witten invariant of $Y$ from this point of view.

First, note that $E$ is torsion of order $6$, with the Seifert invariant of $kE$ 
given by
$$
\begin{array}{cc}
(-2, (2,1), (3,2), (6,5)), & \mbox{ if } k=1,\\
(-1, (2,0), (3,1), (6,4)), & \mbox{ if } k=2,\\
(-1, (2,1), (3,0), (6,3)), & \mbox{ if } k=3,\\
(-1, (2,0), (3,2), (6,2)), & \mbox{ if } k=4,\\
(-1, (2,1), (3,1), (6,1)), & \mbox{ if } k=5.\\
\end{array}
$$
With this understood, for each $d$, there are $6$ distinct equivalence classes $[D_{d,i}]$,
$i=0,1,2,3,4,5$, where in terms of the Seifert invariant,
$$
D_{d,i}=\left \{\begin{array}{cc}
(d, (2,0), (3,0), (6,0)), & \mbox{ if } i=0,\\
(d, (2,0), (3,0), (6,1)), & \mbox{ if } i=1,\\
(d, (2,0), (3,1), (6,0)), & \mbox{ if } i=2,\\
(d, (2,1), (3,0), (6,0)), & \mbox{ if } i=3,\\
(d, (2,1), (3,0), (6,1)), & \mbox{ if } i=4,\\
(d, (2,1), (3,1), (6,0)), & \mbox{ if } i=5.\\
\end{array}
\right .
$$
Let $\xi_{d,i}$ be the $Spin^c$-structure corresponding to $[D_{d,i}]$. Then one can easily 
check that $c_1(\det \xi_{d,i})=(12d +2i)t$. It is straightforward, though tedious, to write down
the Seifert invariants of the elements of each $[D_{d,i}]$, with which one obtains 
$$
SW_Y(\xi_{d,i})=SW_Y([D_{d,i}])=\left \{\begin{array}{cc}
0 & \mbox{ if } d<0,\\
1  & \mbox{ if } d=0, i=0,\\
6d +i & \mbox{ otherwise. }\\
\end{array}
\right.
$$
It is clear that this is the same as given by the polynomial $SW_Y(t)=1+\sum_{m=1}^\infty mt^{2m}$.

\end{exm}

\section{Application: non-K\"{a}hler elliptic surfaces}

We first extend the formula for the Seiberg-Witten invariants of Seifert $3$-manifolds from the
previous section to Seifert fibered $3$-orbifolds. The proof of Theorem 1.4 follows easily from
this with the help of Baldridge's theorem in \cite{Bald2}. 

Let $\pi: Y\rightarrow \Sigma$ be a Seifert fibered $3$-orbifold where $\Sigma$ has genus $g$ 
and $m$ singular points $z_i$ with multiplicities $\alpha_i$, $i=1,\cdots,m$, and where the corresponding orbifold complex line bundle $E$ has Seifert invariant $(e, (\alpha_i,e_i))$. 
Here for each $i$,  $0\leq e_i<\alpha_i$, and $e_i$ and $\alpha_i$ are not necessarily relatively prime. In fact, the components of the singular set of $Y$ are given by exceptional fibers: the fiber 
over $z_i$ lies in the singular set if and only if $\text{gcd }(\alpha_i,e_i)>1$, where in this case, 
$a_i\equiv \text{gcd }(\alpha_i,e_i)$ is the associated multiplicity of the corresponding singular component. 

As we shall see from the proof, the set of $Spin^c$-structures of $Y$ which have nonzero Seiberg-Witten invariant may be identified with a subset of equivalence classes $[D]$, where 
$[D]=[D^\prime]$ if and only if $D-D^\prime\equiv 0 \pmod{E}$, such that under the identification, 
if $\xi$ corresponds to $[D]$ and $D$ has Seifert invariant $(d, (\alpha_i,\beta_i))$, one has 
$$
\det \xi= (2d-2g+2) F+\sum_i (2\beta_i +1-\alpha_i) F_i.
$$
Here $F$ stands for the complex line bundle whose first Chern class is Poincar\'{e} dual to
a regular fiber, and for each $i$, if $a_i\equiv \text{gcd }(\alpha_i,e_i)=1$, $F_i$ stands
for the complex line bundle whose first Chern class is Poincar\'{e} dual to the exceptional 
fiber at $z_i$ (which is not a singular component), and if $a_i\equiv \text{gcd }(\alpha_i,e_i)>1$,  
$F_i$ stands for the orbifold complex line bundle defined as follows: in a neighborhood of the 
exceptional fiber at $z_i$ (which is a singular component with multiplicity $a_i$), $F_i$ is given by the $\Z_{a_i}$-equivariant complex line bundle 
$D^2\times \s^1\times \C$ with the standard $\Z_{a_i}$-action
$$
\lambda\cdot (z,x,w)=(\lambda z,x, \lambda w), \; \lambda=\exp (2\pi i/a_i), \;
(z,x)\in D^2\times \s^1, w\in\C,
$$
and over the rest of $Y$, $F_i$ is defined by extending the trivialization given by the
$\Z_{a_i}$-equivariant nonzero section $(z,x)\mapsto z$ along $\partial D^2\times \s^1$.

\begin{thm}
The Seiberg-Witten invariant of $Y$ is given by
$$
SW_Y(\xi)\equiv SW_Y([D])=\sum_{D^\prime\in [D]} SW_{\s^1\times \Sigma} (D^\prime).
$$
\end{thm}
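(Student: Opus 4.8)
The plan is to reduce the statement to the Seifert $3$-manifold formula already established in Section 5, using Theorem 1.1 to strip off the singular fibers of $Y$ one at a time and Proposition 2.6 to keep track of the determinant line bundles. First I would fix the correspondence between relevant $Spin^c$-structures and equivalence classes $[D]$: the orbifold $Y$ carries the canonical $Spin^c$-structure $\xi_0$ with spinor bundle $\pi^\ast(\I\oplus K_\Sigma^{-1})$, and any $\xi$ with nonzero invariant has spinor bundle $\pi^\ast(D\oplus K_\Sigma^{-1}\otimes D)$ for an orbifold complex line bundle $D$ over $\Sigma$, with $[D]=[D']$ exactly when $D-D'\equiv 0\pmod E$. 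Since $c_1(\det\xi)=\pi^\ast c_1(2D-K_\Sigma)$, the Poincar\'e dual computation gives $\det\xi=(2d-2g+2)F+\sum_i(2\beta_i+1-\alpha_i)F_i$ once $D$ has Seifert invariant $(d,(\alpha_i,\beta_i))$; for a singular fiber ($a_i>1$) the coefficient $2\beta_i+1-\alpha_i$ and the orbifold bundle $F_i$ are precisely what Proposition 2.6 predicts when $\det\xi$ is compared with the determinant of its desingularization.

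To compute the invariant itself, I would pass to the $4$-orbifold $\s^1\times Y$, which Seifert-fibers over the $3$-orbifold $\s^1\times\Sigma$ through the circle action coming from $\pi$. By Donaldson's theorem \cite{D} the relevant $Spin^c$-structures on $\s^1\times Y$ are pulled back from $Y$ and satisfy $SW_Y(\xi)=SW_{\s^1\times Y}(\L)$, so it suffices to compute the latter. Here I would invoke Baldridge's theorem (\cite{Bald2}, Theorem C), in its extension to the orbifold category (cf. Remark 1.2(1)), which expresses $SW_{\s^1\times Y}$ as a sum of $SW_{\s^1\times\Sigma}$ over the fiber direction, that is, over the representatives $D'\in[D]$. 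Combined with the formula for $SW_{\s^1\times\Sigma}(D')$ derived in Section 5, this yields the asserted identity whenever Baldridge's reduction is valid on the nose.

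The point that needs justification is the validity of this reduction across the singular fibers of $Y$, where $\s^1\times Y$ acquires orbifold singularities along the corresponding $2$-tori. This is precisely where Theorem 1.1 enters: removing a singular fiber via the canonical map $\phi$ leaves the Seiberg-Witten invariant unchanged, $SW_Y(\xi)=SW_{Y_0}(\phi(\xi))$, while Proposition 2.6 records the accompanying twist $\det\phi(\xi)=\det\xi\otimes E_{a_i-2\beta_i-1}$ of the determinant line bundle. Thus the contribution of each singular fiber to the Baldridge sum coincides with the contribution of its desingularization, and iterating $\phi$ over all indices with $a_i>1$ reduces the computation to a genuine Seifert $3$-manifold, for which Baldridge's manifold theorem applies verbatim as in Section 5.

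I expect the main obstacle to be the combinatorial bookkeeping rather than any new analytic input. Concretely, one must check that under the successive applications of $\phi$ the Seifert data $(d,(\alpha_i,\beta_i))$ and the equivalence classes $[D]$ (taken modulo $E$) transform compatibly, so that the summation $\sum_{D'\in[D]}$ and the range of nonvanishing terms are preserved; the shift $E_{a_i-2\beta_i-1}$ of Proposition 2.6 must be reconciled with the change of Seifert invariants at each desingularized fiber and with the accompanying change of the base orbifold, and one must verify that the canonical $\xi_0$ and the trivializations used in defining $c([(\xi^-_\beta,h_-)])$ match the choices implicit in Baldridge's theorem. Once this matching is carried out, the identity $SW_Y([D])=\sum_{D'\in[D]}SW_{\s^1\times\Sigma}(D')$ follows.
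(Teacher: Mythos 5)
Your proposal follows essentially the same route as the paper: strip the singular fibers of $Y$ one at a time via Theorem 1.1 (using the relative first Chern class computations of Lemma 2.3 and Proposition 2.6 to track how the Seifert data transform), and reduce inductively to the Seifert $3$-manifold case of Section 5, where Baldridge's theorem applies verbatim to $\s^1\times Y$. The ``combinatorial bookkeeping'' you defer is precisely the content of the paper's argument --- namely that $\phi$ sends the class $[D]$ with Seifert invariant $(d,(\alpha_i,\beta_i))$ to $[D_0]$ with local datum $(\alpha_m/a_m,[\beta_m/a_m])$ at the desingularized fiber, giving a bijection $[D]\leftrightarrow[D_0]$ that preserves the background degree and hence each summand $SW_{\s^1\times\Sigma}(D^\prime)$ --- and, as in your final paragraph, no orbifold extension of Baldridge's theorem is ever needed since the reduction is carried out entirely at the level of $3$-orbifolds.
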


\begin{proof}
Without loss of generality, we assume that $a_m>1$, which means that the exceptional fiber at 
$z_m$ is a singular component. We let $Y_0$ be the $3$-orbifold obtained by removing the
exceptional fiber at $z_m$ from the singular set of $Y$. Then it follows easily that $Y_0$ is 
Seifert fibered over $\Sigma_0$, where $\Sigma_0$ is the $2$-orbifold obtained from $\Sigma$
by changing the multiplicity of $z_m$ to $\alpha_m/a_m$. The corresponding orbifold complex
line bundle $E_0$ for $Y_0$ has Seifert invariant 
$$
(e, (\alpha_1,e_1), \cdots, (\alpha_{m-1}, e_{m-1}), (\alpha_m/a_m, e_m/a_m)).
$$
In fact, let $\pi:Y\rightarrow \Sigma$ and $\pi_0:Y_0\rightarrow \Sigma_0$ denote the Seifert
fibrations. Then the local models for $\pi$ and $\pi_0$ near $z_m$ are given as follows. 
Consider the $\Z_{\alpha_m}$-action on $D^2\times \s^1$ given by
$$
\lambda\cdot (z,w)=(\lambda z,\lambda^{e_m} w), \; \lambda=\exp (2\pi i/\alpha_m),
(z,w)\in D^2\times \s^1. 
$$
The Seifert fibration $\pi$ is given by the $\Z_{\alpha_m}$-equivariant map $(z,w)\mapsto z$.
Likewise, $\pi_0$ is given by the $\Z_{\alpha_m/a_m}$-equivariant map $(z,w)\mapsto z$,
where the $\Z_{\alpha_m/a_m}$-action on $D^2\times \s^1$ is given by 
$$
\lambda_0\cdot (z,w)=(\lambda_0 z,\lambda_0^{e_m/a_m} w), \; \lambda_0=\lambda^{a_m}
=\exp (2\pi a_m i/\alpha_m), (z,w)\in D^2\times \s^1. 
$$
Now let $\Sigma_{-}$, $\Sigma_{0,-}$ be regular neighborhoods of $z_m$ in $\Sigma$,
$\Sigma_0$ respectively. Then there are decompositions $\Sigma=\Sigma_{+}\cup_\gamma
\Sigma_{-}$ and $\Sigma_0=\Sigma_{+}\cup_\gamma \Sigma_{0,-}$, where $\gamma$ is the
link around $z_m$. These decompositions are compatible with the decompositions 
$Y=Y_{+}\cup_T Y_{-}$ and $Y_0=Y_{+}\cup_T Y_{0,-}$, where $T$ is the $2$-torus over
$\gamma$. Moreover, $\pi=\pi_0: Y_{+}\rightarrow \Sigma_{+}$.

With the preceding understood, we shall reduce the proof of the theorem for $Y$ to that for 
$Y_0$ using Theorem 1.1, and inductively, the theorem is reduced to the case of 
Seifert $3$-manifolds, which we have already established.

To this end, we associate to each orbifold complex line bundle $D$ over $\Sigma$ an orbifold
complex line bundle $D_0$ over $\Sigma_0$ as follows. Suppose $D$ has Seifert invariant 
$(d, (\alpha_i,\beta_i))$. Then we define $D_0$ to be the one which has Seifert invariant
$$
(d, (\alpha_1,\beta_1), \cdots, (\alpha_{m-1}, \beta_{m-1}), (\alpha_m/a_m, [\beta_m/a_m])),
$$
where $\beta_m=[\beta_m/a_m]\cdot a_m +b_m$, with $0\leq b_m<a_m$. We claim that
under the map $\phi$ in Theorem 1.1, the $Spin^c$-structure corresponding to $[D]$ is sent to the 
$Spin^c$-structure corresponding to $[D_0]$.

To see this, let $\xi$, $\xi^0$ be the $Spin^c$-structures corresponding to $[D]$ and $[D_0]$
respectively. Then the associated spinor bundles are $S=\pi^\ast (D\oplus K_\Sigma^{-1}\otimes D)$
and $S^0=\pi_0^\ast (D_0\oplus K_{\Sigma_0}^{-1}\otimes D_0)$. Over $Y_{+}$, $S$ and $S^0$ are
identified as follows. Consider $D$ over $\Sigma_{-}$ and $D_0$ over $\Sigma_{0,-}$. There are
canonical trivializations of $D$ and $D_0$ along $\gamma$, such that with these trivializations
the relative first Chern class of $D$ and $D_0$ over $\Sigma_{-}$ and $\Sigma_{0,-}$ are
$\beta_m/\alpha_m$ and $[\beta_m/a_m]/(\alpha_m/a_m)$ respectively. In fact, these 
trivializations are given by the equivariant nonzero sections $z\mapsto z^{\beta_m}$
and $z\mapsto z^{[\beta_m/a_m]}$ respectively. Now by the definition of $D_0$, $D$ is isomorphic
to $D_0$ over $\Sigma_{+}$ with these trivializations on the boundary 
$\gamma=\partial \Sigma_{+}$. Likewise, $K_\Sigma^{-1}$ and $K_{\Sigma_0}^{-1}$ are 
isomorphic over $\Sigma_{+}$ with certain canonical trivializations on $\gamma$, 
which are given by the
equivariant nonzero section $z\mapsto z$. Note that with respect to these trivializations, the
relative first Chern class of $K_\Sigma^{-1}$ and $K_{\Sigma_0}^{-1}$ over $\Sigma_{-}$ and $\Sigma_{0,-}$ are $1/\alpha_m$ and $1/(\alpha_m/a_m)$ respectively. Now with
$\pi=\pi_0: Y_{+}\rightarrow \Sigma_{+}$, these isomorphisms between $D$, $D_0$ and 
$K_\Sigma^{-1}$, $K_{\Sigma_0}^{-1}$ over $\Sigma_{+}$ give an identification between $S$
and $S^0$ over $Y_{+}$, as a bundle with a fixed trivialization on $T=\partial Y_{+}$. 

With this understood, the claim $\xi^0=\phi(\xi)$ will follow by examining the relative first Chern
classes of $\det S$ and $\det S^0$ over $Y_{-}$ and $Y_{0,-}$ with respect to these trivializations
on $T$. The relative first Chern classes can be calculated using the local models for $\pi$ and
$\pi_0$ near $z_m$ and the equivariant nonzero sections defining these trivializations; their 
evaluations on the $2$-disc $D^2\times \{pt\}$ are (cf. \cite{C0}, Lemma 3.6)
$$
\frac{2\beta_m+1}{a_m}=2[\beta_m/a_m] +\frac{2b_m+1}{a_m} \mbox{ for } \det S,
$$
and 
$$
2[\beta_m/a_m] +1 \mbox{ for } \det S_0.
$$
By Lemma 2.3, the trivializations of $S|_{Y_{-}}$ and $S^0|_{Y_{0,-}}$ on $T$ must be of the form
$h_{-}: \xi|_T\rightarrow \xi_0$ and $h_{0,-}:\xi^0|_T\rightarrow \xi_0$, and moreover, by
Definition 2.4, $\xi^0=\phi(\xi)$ as claimed.

By Theorem 1.1, we have $SW_Y([D])=SW_{Y_0}([D_0])$. Thus the proof of the theorem 
boils down to verifying
$$
\sum_{D^\prime\in [D]} SW_{\s^1\times \Sigma} (D^\prime)
=\sum_{D^\prime_0\in [D_0]} SW_{\s^1\times \Sigma_0} (D^\prime_0).
$$
To see this, first note that under the correspondence $D\mapsto D_0$, $D^\prime\in
[D]$ if and only if $D_0^\prime\in [D_0]$ (where $D^\prime\mapsto D_0^\prime$), 
because $D^\prime-D\equiv 0\pmod{E}$ if and only if $D^\prime_0-D_0\equiv 0\pmod{E_0}$.
Secondly, $SW_{\s^1\times \Sigma} (D^\prime)=SW_{\s^1\times \Sigma_0} (D^\prime_0)$, 
because if $D^\prime$ is given by the Seifert invariant $(d^\prime, (\alpha_i,\beta_i^\prime))$,
then $D_0^\prime$ is given by 
$$
(d^\prime, (\alpha_1,\beta_1^\prime), \cdots, (\alpha_{m-1}, \beta_{m-1}^\prime), 
(\alpha_m/a_m, [\beta_m^\prime/a_m])),
$$
so that both $SW_{\s^1\times \Sigma} (D^\prime)$ and $SW_{\s^1\times \Sigma_0} (D^\prime_0)$
are equal to 
$$
\left \{\begin{array}{ll}
(-1)^{d^\prime} \left (\begin{array}{c}
2g-2\\
d^\prime
\end{array}
\right ) & \mbox{ if } g\geq 1, d^\prime\in [0,2g-2],\\
d^\prime+1 & \mbox{if } g=0, \; d^\prime\geq 0,\\
0 & \mbox{ otherwise. }
\end{array}
\right .
$$

Finally, we remark that the above proof also shows that the equation 
$$
\det \xi= (2d-2g+2) F+\sum_i (2\beta_i +1-\alpha_i) F_i
$$
follows,  with the help of Proposition 2.6, from the corresponding equation for
Seifert $3$-manifolds. This finishes the proof of Theorem 6.1. 

\end{proof}

In the remaining part of this section, we give a proof of Theorem 1.4. To this end, 
let $X$ be a minimal elliptic surface obtained from $E\times C$,
where $E=\C/\Lambda$ and $C$ is a curve of genus $g$, by doing logarithmic transforms
on lifts $x_i\in\C$ of $m_i$-torsion points $\xi_i$ modulo $\Lambda$, $i=1,2,\cdots,n$.
We remark that $X$ is non-K\"{a}hler if and only if $\sum_i x_i\neq 0$. In this case,
$b_2^{+}(X)=2g$. If $X$ is K\"{a}hler, i.e., $\sum_i x_i=0$, then $b_2^{+}(X)=2g+1$.
See \cite{FM} for details. 

For our purpose here it is more convenient to describe $X$ topologically as a principal 
$T^2$-bundle over $\Sigma$, where $\Sigma$ is the $2$-orbifold canonically obtained
from $C$ as follows. Let $t_i$, $i=1,\cdots,n$, be the point of $C$ over which the logarithmic
transform on $x_i$ is performed. Then the singular points of $\Sigma$ consist of $t_i$ with 
multiplicity $m_i$ for all $i$ where $m_i>1$. 

As we explained in the introduction, after fixing a basis ${\bf e}_1$, ${\bf e}_2$ of $\Lambda$,
$X$ gives rise to a pair of orbifold complex line bundles (or equivalently, principal $\s^1$-bundles)
$E_1$, $E_2$ over $\Sigma$. We shall give a more intrinsic description of $E_1$, $E_2$
here. Note that ${\bf e}_1$, ${\bf e}_2$ determine an identification of $T^2=\C/\Lambda$ with
$\s^1\times \s^1=\R/\Z {\bf e}_1 \times \R/\Z {\bf e}_2$. Let $p_1,p_2:T^2\rightarrow \s^1$ be
the corresponding projections onto the first and the second factor respectively, and let $Y_1$,
$Y_2$ be the principal $\s^1$-bundles over $\Sigma$ induced by $p_1, p_2$. Then $E_1$, $E_2$
are the orbifold complex line bundles associated to $Y_1$, $Y_2$. Note that this process can
be reversed: given a pair of principal $\s^1$-bundles $Y_1$, $Y_2$ over $\Sigma$, we obtain
$X$ back as the pull-back bundle of $Y_1\times Y_2\rightarrow \Sigma\times \Sigma$
via the diagonal map $\Sigma\rightarrow \Sigma\times \Sigma$.

If we choose a different basis ${\bf e}^\prime_1$, ${\bf e}^\prime_2$, with $E_1^\prime$,
$E_2^\prime$ being the corresponding orbifold complex line bundles, where
$$
({\bf e}_1, {\bf e}_2)=({\bf e}^\prime_1, {\bf e}^\prime_2) \left (\begin{array}{cc}
a & b\\
c & d\\
\end{array}
\right ), \mbox{ for some } \left (\begin{array}{cc}
a & b\\
c & d\\
\end{array}
\right )\in SL(2;\Z),
$$
then $E_1^\prime= aE_1+bE_2$, $E_2^\prime=cE_1+dE_2$. In particular, the subgroup of
orbifold complex line bundles generated by $E_1,E_2$ coincides with the subgroup generated 
by $E_1^\prime,E_2^\prime$, which was denoted by $\Gamma_X$ in the introduction.

Finally, $X$ may be regarded as a principal $\s^1$-bundle over the $3$-orbifold $Y_2$,
with a bundle morphism $X\rightarrow Y_1$ which induces the Seifert fibration $\pi_2:
Y_2\rightarrow \Sigma$. In other words, the orbifold complex line bundle corresponding to
the Seifert fibration $\pi: X\rightarrow Y_2$ is $\pi_2^\ast E_1$. 

With the preceding understood, the following theorem is a slightly more general version
of Theorem 1.4, where $X$ is allowed to be K\"{a}hler.

\begin{thm}
Suppose $b_2^{+}\geq 1$ (which means that $X$ must be K\"{a}hler when $g=0$). Then the
subset of $Spin^c$-structures which may have nonzero Seiberg-Witten invariant can be 
identified with the set $\{(D)\}$, where $(D)$ is the orbit of an orbifold complex line bundle $D$
under the action of $\Gamma_X$. With this understood,
$$
SW_X((D))=\left \{\begin{array}{ll}
\sum_{D^\prime\in(D), |D^\prime|\in [0,2g-2]}(-1)^{|D^\prime|} \left (\begin{array}{c}
2g-2\\
|D^\prime|
\end{array}
\right ) & \mbox{ if } g\geq 1\\
\sum_{D^\prime\in(D), |D^\prime|\geq 0} (|D^\prime|+1) & \mbox{if } g=0.\\
\end{array}
\right .
$$
Moreover, if $\L$ is the $Spin^c$-structure corresponding to $(D)$, then
$$
c_1(\det \L)=(2d-2g+2)F +\sum_i (2s_i+1-m_i)F_i,
$$
where $F$ stands for a regular fiber and $F_i$ for the fiber at $t_i$ of the elliptic fibration
on $X$, and $(d,(m_i,s_i))$ is the Seifert invariant of $D$.
\end{thm}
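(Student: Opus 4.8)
The plan is to compute $SW_X$ by reducing, through the two nested circle fibrations $X\to Y_2\to\Sigma$, to the formula for Seifert fibered $3$-orbifolds already established in Theorem 6.1. Recall that $X$ is a principal $\s^1$-bundle over $Y_2$ with Euler class $\pi_2^\ast E_1$, while $\pi_2:Y_2\to\Sigma$ is the Seifert fibration with Euler class $E_2$. First I would apply Baldridge's theorem \cite{Bald2} to the bundle $X\to Y_2$ --- exactly as in Section 5, but now over the $3$-orbifold $Y_2$ in place of $\s^1\times\Sigma$. This identifies the $Spin^c$-structures of $X$ that can carry a nonzero invariant with pull-backs of $Spin^c$-structures of $Y_2$, indexed by orbifold complex line bundles $D$ over $\Sigma$ modulo the subgroup generated by $E_1$, and yields
$$SW_X((D))=\sum_{[D']} SW_{Y_2}([D']),$$
where $[D']$ runs over the distinct $Spin^c$-structures of $Y_2$ obtained from the class of $D$ by tensoring with integer powers of $E_1$.

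Next I would insert Theorem 6.1, which rewrites each $SW_{Y_2}([D'])$ as $\sum_{D''\in[D']}SW_{\s^1\times\Sigma}(D'')$, the sum ranging over the distinct bundles in the $E_2$-coset of $D'$. Composing the two steps, the outer sum over $E_1$-translates and the inner sums over $E_2$-cosets reorganize into a single sum over the coset $D+\langle E_1,E_2\rangle=(D)$, with each element of the $\Gamma_X$-orbit counted exactly once (the $E_2$-cosets partition the orbit). Hence
$$SW_X((D))=\sum_{D'\in(D)} SW_{\s^1\times\Sigma}(D'),$$
and substituting the explicit values of $SW_{\s^1\times\Sigma}$ recorded in Section 5 produces the binomial sum when $g\geq 1$ and the sum $\sum(|D'|+1)$ when $g=0$, exactly as stated. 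Carrying out this identification coherently is precisely what realizes the relevant $Spin^c$-structures of $X$ as the orbits $(D)$.

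For the determinant line bundle I would begin with the expression $c_1(\det\xi)=(2d-2g+2)F+\sum_i(2s_i+1-m_i)F_i$ valid on $Y_2$ by Theorem 6.1 (matching $\alpha_i=m_i$, $\beta_i=s_i$). Since $\L$ is the pull-back of $\xi$ under $\pi:X\to Y_2$, we have $c_1(\det\L)=\pi^\ast c_1(\det\xi)$. The preimage under $\pi$ of the regular (resp.\ singular) fiber of $Y_2\to\Sigma$ is the regular (resp.\ singular) fiber of the elliptic fibration on $X$, so $\pi^\ast$ carries the fiber classes appearing in the $Y_2$-formula to the classes $F$, $F_i$ on $X$; this gives the claimed formula for $c_1(\det\L)$.

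The crux is the bookkeeping of the second paragraph: one must check that the two successive reductions --- modulo $E_1$ from Baldridge's theorem and modulo $E_2$ from the Seifert structure --- are compatible and compose to a single reduction modulo $\Gamma_X$ with no over- or under-counting, and that the resulting labelling of $Spin^c$-structures by orbits $(D)$ is independent of the basis $\mathbf{e}_1,\mathbf{e}_2$ of $\Lambda$. One should also confirm that Baldridge's theorem applies with an orbifold total space and base; this is harmless here, since the singular fibers lie in the interior, away from any gluing region.
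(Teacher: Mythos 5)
Your proposal follows essentially the same route as the paper: apply Baldridge's theorem to the Seifert fibration $\pi:X\rightarrow Y_2$, feed in the formula of Theorem 6.1 for $SW_{Y_2}$, and observe that the double sum over $E_1$-translates and $E_2$-cosets collapses to a single sum over the $\Gamma_X$-orbit $(D)$, with $c_1(\det\L)$ obtained by pulling back the corresponding formula from $Y_2$. The bookkeeping you flag as the crux is exactly the identification $[[D]]=(D)$ carried out in the paper, so the argument is correct as proposed.
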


We remark that when $b_2^{+}=1$, where this happens if and only if $g=0$ and $X$ is K\"{a}hler, 
the Seiberg-Wtten invariant of $X$ is defined using the Taubes chamber, i.e., using the 
K\"{a}hler form to orient $H^{2,+}(X;\R)$.

\begin{proof}
Applying Baldridge's theorem in \cite{Bald2} to the Seifert fibration $\pi: X\rightarrow Y_2$, and
with Theorem 6.1, we see that the relevant $Spin^c$-structures of $X$ are parametrized by the 
set of equivalence classes $[[D]]$, where $[[D]]=[[D^\prime]]$ if and only if $[D]$ and $[D^\prime]$
differ by a multiple of $[E_1]$. On the other hand, recall from Theorem 6.1 that $[D]$ stands
for the equivalence class of $D$ where $D,D^\prime$ are considered equivalent if and only if
$D-D^\prime\equiv 0\pmod{E_2}$, or equivalently, $\pi_2^\ast D=\pi_2^\ast D^\prime$.
It follows easily that the equivalence class $[[D]]$ coincides with the orbit $(D)$. Furthermore, 
by Baldridge's formula in \cite{Bald2} as well as the formula in Theorem 6.1, 
\begin{eqnarray*}
SW_X((D)) & = & \sum_{[D_1]\in [[D]]} SW_{Y_2} ([D_1])=\sum_{[D_1]\in [[D]]}
\sum_{D_2\in [D_1]} SW_{\s^1\times\Sigma} (D_2)\\
& = & \sum_{D^\prime\in (D)} 
SW_{\s^1\times\Sigma}(D^\prime),\\
\end{eqnarray*}
from which the claimed formula for $SW_X((D))$ in Theorem 6.2 follows. Finally, the formula for
$c_1(\det \L)$ follows directly from the corresponding formula for Seifert fibered $3$-orbifolds 
(cf. Theorem 6.1).

\end{proof}

\vspace{3mm}

{\bf Acknowledgements:} 
I wish to thank Cliff Taubes for helpful communications.
The work was finished during the sabbatical leave at the Institute for Advanced Study in Fall
2011. I wish to thank the institute for its hospitality and financial support (provided by The S.S. 
Chern Fund and through NSF grant DMS-0635607).


\begin{thebibliography}{99}
\bibitem{Bald1} S. Baldridge, {\em Seiberg-Witten invariants of $4$-manifolds
                with free circle actions}, Comm. Contemp. Math. {\bf 3},
                No. 3 (2001), 341-353.

\bibitem{Bald2} ----------------, {\em Seiberg-Witten invariants, orbifolds, and
                 circle actions}, Tran. of AMS {\bf 355}, No. 4 (2002), 
               1669-1697.

\bibitem{BPV} W. Barth, K. Hulek, C. Peters and A. Van de Ven, 
                     {\em Compact Complex Surfaces}, 2nd enlarged edition,
                       Ergebnisse der Math. Vol. 4, Springer-Verlag, 2004.

\bibitem{Biq} O. Biquard, {\em Les \'{e}quations de Seiberg-Witten sur une surface complexe 
non K\"{a}hl\'{e}rienne}, Comm. Anal. Geom. {\bf 6} (1998), 173-197.

\bibitem{BLP} M. Boileau, B. Leeb and J. Porti, {\em Geometrization 
of 3-dimensional orbifolds}, Ann. of Math. (2) {\bf 162}(2005),  no. 1, 195--290.

\bibitem{BMP} M. Boileau, S. Maillot and J. Porti, {\em Three-dimensional 
orbifolds and their geometric structures}, Panoramas et Synthèses {\bf 15}, 
Société Mathématique de France, Paris, 2003.

\bibitem{B} R. Brussee, {\em The canonical class and the $C^\infty$ properties of K\"{a}hler
 surfaces}, New York J. Math. {\bf 2} (1996), 103-146. 
               
\bibitem{C0} W. Chen, {\em Orbifold adjunction formula and symplectic cobordisms 
                  between lens spaces}, Geometry and Topology  {\bf 8} 
                  (2004), 701-734.

\bibitem{D} S. Donaldson, {\em The Seiberg-Witten equations and $4$-manifold topology}, 
Bull. Amer. Math. Soc. (N.S.) {\bf 33} (1996), no.1, 45-70. 


\bibitem{FS} R. Fintushel and R. Stern, {\em Knots, links, and
             $4$-manifolds}, Invent. Math.{\bf 134} (1998), 363-400.
             
\bibitem{FM} R. Friedman and J.W. Morgan, {\em Smooth Four-manifolds and Complex Surfaces},
 Ergebnisse der Math., Springer-Verlag, 1994.
                     
\bibitem{FM1} -----------------, {\em Obstruction bundles, semi regularity, and Seiberg-Witten invariants}, Comm. Ana. Geom. {\bf 7} (1999), no.3, 451-495. 
                    
\bibitem{LL} T.J. Li and A. Liu, {\em General wall crossing formula}, Math. Res. Lett. {\bf 2} (1995),
               797-810.

\bibitem{MM} D. McCullough and A. Miller, {\em Manifold covers of $3$-orbifolds with 
                      geometric pieces},
                     Topology and its Applications {\bf 31} (1989), 169-185.                                  


\bibitem{MT} G. Meng and C.H. Taubes, {\em \underline{SW}= Milnor torsion},
             Math. Res. Lett. {\bf 3} (1996), 661-674.

\bibitem{M} J.W. Morgan, {\em The Seiberg-Witten Equations and Applications 
               to the Topology of Smooth Four-Manifolds}, Mathematical Notes
               {\bf 44}, Princeton Univ. Press, Princeton, New Jersey, 1996.

\bibitem{MMS} J.W. Morgan, T.S. Mrowka and Z. Szabo, {\em Product formulas 
                  along $T^3$ for Seiberg-Witten invariants}, 
                    Math. Res. Lett. {\bf 4} (1997), 915-929.  
                    
\bibitem{MOY} T. Mrowka, P. Ozsv\'{a}th and B. Yu, {\em Seiberg-Witten monopoles on
            Seifert fibered spaces}, Comm. Ana. Geom. {\bf 5} (1997), no.4, 685-793.     

\bibitem{Pe} C. Petronio, {\em Spherical splitting of $3$-orbifolds}, Math. Proc. Camb. Phil. 
Soc. (2007) {\bf 142}, 269-287.                            
                    
\bibitem{T1} C.H. Taubes, {\em $SW\Rightarrow Gr$: from the Seiberg-Witten
                          equations to pseudoholomorphic curves}, J. Amer.
                           Math. Soc. {\bf 9} (1996), 845-918.         

\bibitem{T2} -----------, {\em The Seiberg-Witten invariants and $4$-manifolds 
                           with essential tori}, Geometry and Topology 
                           {\bf 5} (2001), 441-519.
                           
\bibitem{Tu1} V.  Turaev, {\em Torsion invariants of $Spin^c$-structures on $3$-manifolds}, 
Math. Res. Lett. {\bf 4} (1997), 679-695.    

\bibitem{Tu2} -------------, {\em A combinatorial formulation for the Seiberg-Witten invariants of 
$3$-manifolds}, Math. Res. Lett. {\bf 5} (1998), 583-598.                         

\bibitem{W} S.R. Williams, {\em The Seiberg-Witten Invariant on Non-K\"{a}hler Complex 
Surfaces}, Thesis (Ph.D.)--University of Adelaide, Dept. of Pure Mathematics, 1997.
           
\end{thebibliography}
\end{document}